\documentclass{article}

\usepackage{amsmath,amssymb,amsfonts,amsthm}
\usepackage{accents}
\usepackage{fullpage}
\usepackage{color}
\usepackage{tikz}
\usetikzlibrary{decorations.pathreplacing}
\usepackage{graphicx}

\newcommand{\bk}{\bold{m}}
\newtheorem{theorem}{Theorem}[section]
\newtheorem{prop}[theorem]{Proposition}
\newtheorem{lemma}[theorem]{Lemma}

\theoremstyle{remark}

\newtheorem{example}{Example}
\newtheorem{definition}[theorem]{Definition}

\begin{document}

\title{Stochastic Baxterisation of a fused Hecke algebra}

\author{Jeffrey Kuan}

\date{}

\maketitle

\abstract{Baxterisation is a procedure which constructs solutions of the Yang--Baxter equation from algebra representations. A recent paper \cite{crampebax} provides Baxterisation formulas for a fused Hecke algebra. In this paper, we provide a stochastic version of Baxterisation for the fused Hecke algebra, which yields stochastic solutions to the Yang--Baxter equation. The coefficients in the Baxterisation formula previously appeared as the $q$--Hahn weights. This results in new formulas for the weights of the stochastic higher--spin vertex model.  

\section{Introduction}
The Yang--Baxter equation is a well--known equation arising from theoretical physics. Its importance arises from its use in the quantum inverse scattering method, or algebraic Bethe Ansatz, which goes back at least forty years \cite{STF79}. In more recent years, \textit{stochastic} solutions to the Yang--Baxter equation have been studied in the field of integrable probability (e.g. \cite{CorPetCMP,Bor16,BP,BWColor}). In the probabilistic context, the Yang--Baxter equation leads to exact formulas for expectations of certain observables.

Generally speaking, it is difficult to find stochastic solutions to the Yang--Baxter equation. One fruitful method has been to use the quasi--triangular structure of Drinfeld--Jimbo quantum groups (e.g. \cite{KMMO}). In this paper, we propose another method to produce stochastic solutions to the Yang--Baxter equation, called \textit{stochastic Baxterisation}. The usual Baxterisation starts with a solution of the braid equation, given by some algebra representation, and constructs a solution the Yang--Baxter equation with spectral parameters. Baxterisation goes back to \cite{Jones:1990hq}. In the stochastic variant, the coefficients of the algebra elements are stochastic weights, and taking a stochastic representation yields matrix--valued solutions to the Yang--Baxter equation. We carry out the calculation for the fused Hecke algebra of \cite{crampe2020fused,crampebax}. This yields new formulas for the stochastic higher--spin vertex model. The method is expected to work for other algebras (e.g. \cite{CGX,ZGB,Li93,BM01,Arn,Kulish:2009cx,doi:10.1063/1.4905893,Crampe:2016nek,VanThesis,Crampe:2019cig})

Section \ref{BG} contains the relevant background, and section \ref{SV} contains the new results. Most of the background material is taken from \cite{crampe2020fused,crampebax}.
 
\section{Background}\label{BG}
Recall that the usual Hecke algebra $H_n(q)$ is generated by elements $\sigma_1, \ldots, \sigma_{n-1}$ with defining relations:
$$
\begin{array}{ll}\sigma_{i}^{2}=\left(q-q^{-1}\right) \sigma_{i}+1 & \text { for } i \in\{1, \ldots, n-1\} \\ \sigma_{i} \sigma_{i+1} \sigma_{i}=\sigma_{i+1} \sigma_{i} \sigma_{i+1} & \text { for } i \in\{1, \ldots, n-2\} \\ \sigma_{i} \sigma_{j}=\sigma_{j} \sigma_{i} & \text { for } i, j \in\{1, \ldots, n-1\} \text { such that }|i-j|>1\end{array}$$
It is well--known, and straightforward to check, that the elements 
\begin{equation}\label{eq:Rhecke}
\check{R}_{i}(u):=\check{R}_{i}^{(1)}(u)=\sigma_{i}-\frac{q-q^{-1}}{1-u}
\end{equation}
satisfy the Yang--Baxter equation
$$
\check{R}_{i}^{(1)}(u) \check{R}_{i+1}^{(1)}(u v) \check{R}_{i}^{(1)}(v)=\check{R}_{i+1}^{(1)}(v) \check{R}_{i}^{(1)}(u v) \check{R}_{i+1}^{(1)}(v)
$$

Before defining the fused Hecke algebra from \cite{crampe2020fused}, we first define fused permutations in terms of diagrams. Place two rows of $L$ ellipses, one on top of the other, and connect top ellipses with bottom ellipses with edges. Fix a sequence of positive integers $\mathbf{m}=(m_1,\ldots,m_L)$. For each $x \in \{1,\ldots,L\}$, exactly $m_x$ edges start from the $x$--th top ellipses and exactly $m_x$ edges arrive at the $x$--th bottom ellipse. For each $x \in \{1,\ldots,L\}$, let $I_x$ denote the multi-set indicating the bottom ellipses reached by the $m_x$ edges starting from the $x$--th top ellipse. Two diagrams are equivalent if their multisets $(I_1,\ldots,I_L)$ coincide. A \textit{fused permutation} is an equivalence class of diagrams. Below is an example of a fused permutation with the corresponding multi--set.

\begin{center}
\begin{tikzpicture}[scale=0.3]

\fill (21,2) ellipse (0.6cm and 0.2cm);\fill (21,-2) ellipse (0.6cm and 0.2cm);
\draw[thick] (20.8,2) -- (20.8,-2);\draw[thick] (21.2,2)..controls +(0,-2) and +(0,+2) .. (27,-2);  
\fill (24,2) ellipse (0.6cm and 0.2cm);\fill (24,-2) ellipse (0.6cm and 0.2cm);
\draw[thick] (24,2)..controls +(0,-2) and +(0,+2) .. (21,-2); 
\fill (27,2) ellipse (0.6cm and 0.2cm);\fill (27,-2) ellipse (0.6cm and 0.2cm);
\draw[thick] (27,2)..controls +(0,-2) and +(0,+2)..(24,-2);
\node at (38,0) {$(\{1,3\},\{1\},\{2\})$};

\end{tikzpicture}
\end{center}
If all $m_a=1$, the fused permutation is just a usual permutation in the symmetric group $S_L$.

Fused permutations can be multiplied by concatenating two diagrams, removing the middle ellipses, and normalizing. Because the precise form of the multiplication will not be needed here, we refer to \cite{crampe2020fused} for details. 

\begin{definition}
The $\mathbb{C}$-vector space $H_{\bk,L}(q)$ is the quotient of the vector space with basis indexed by fused braids by the following relations:
 \begin{itemize}
   \item[(i)] The Hecke relation:  
   \begin{center}
 \begin{tikzpicture}[scale=0.25]
\draw[thick] (0,2)..controls +(0,-2) and +(0,+2) .. (4,-2);
\fill[white] (2,0) circle (0.4);
\draw[thick] (4,2)..controls +(0,-2) and +(0,+2) .. (0,-2);
\node at (6,0) {$=$};
\draw[thick] (12,2)..controls +(0,-2) and +(0,+2) .. (8,-2);
\fill[white] (10,0) circle (0.4);
\draw[thick] (8,2)..controls +(0,-2) and +(0,+2) .. (12,-2);
\node at (17,0) {$-\,(q-q^{-1})$};
\draw[thick] (21,2) -- (21,-2);\draw[thick] (25,2) -- (25,-2);
\end{tikzpicture}
\end{center}
  \item[(ii)]  The idempotent relations: for top ellipses,
 \begin{center}
 \begin{tikzpicture}[scale=0.4]
\fill (2,2) ellipse (0.8cm and 0.2cm);
\draw[thick] (2.2,2)..controls +(0,-1.5) and +(1,1) .. (1.2,0);
\fill[white] (2,0.7) circle (0.2);
\draw[thick] (1.8,2)..controls +(0,-1.5) and +(-1,1) .. (2.8,0);
\node at (4.5,1) {$=$};
\node at (7,1) {$q$};
\fill (9,2) ellipse (0.8cm and 0.2cm);
\draw[thick] (8.8,2)..controls +(0,-1.5) and +(0.5,0.5) .. (8.2,0);
\draw[thick] (9.2,2)..controls +(0,-1.5) and +(-0.5,0.5) .. (9.8,0);

\node at (13,1) {and};

\fill (18,2) ellipse (0.8cm and 0.2cm);
\draw[thick] (17.8,2)..controls +(0,-1.5) and +(-1,1) .. (18.8,0);
\fill[white] (18,0.7) circle (0.2);
\draw[thick] (18.2,2)..controls +(0,-1.5) and +(1,1) .. (17.2,0);
\node at (20.5,1) {$= $};
\node at (23,1) {$q^{-1}$};
\fill (25,2) ellipse (0.8cm and 0.2cm);
\draw[thick] (24.8,2)..controls +(0,-1.5) and +(0.5,0.5) .. (24.2,0);
\draw[thick] (25.2,2)..controls +(0,-1.5) and +(-0.5,0.5) .. (25.8,0);
\end{tikzpicture}
\end{center}
and for bottom ellipses,
\begin{center}
 \begin{tikzpicture}[scale=0.4]
\fill (2,0) ellipse (0.8cm and 0.2cm);
\draw[thick] (1.8,0)..controls +(0,1.5) and +(-1,-1) .. (2.8,2);
\fill[white] (2,1.25) circle (0.2);
\draw[thick] (2.2,0)..controls +(0,1.5) and +(1,-1) .. (1.2,2);
\node at (4.5,1) {$=$};
\node at (7,1) {$q$};
\fill (9,0) ellipse (0.8cm and 0.2cm);
\draw[thick] (9.2,0)..controls +(0,1.5) and +(-0.5,-0.5) .. (9.8,2);
\draw[thick] (8.8,0)..controls +(0,1.5) and +(0.5,-0.5) .. (8.2,2);

\node at (13,1) {and};

\fill (18,0) ellipse (0.8cm and 0.2cm);
\draw[thick] (18.2,0)..controls +(0,1.5) and +(1,-1) .. (17.2,2);
\fill[white] (18,1.25) circle (0.2);
\draw[thick] (17.8,0)..controls +(0,1.5) and +(-1,-1) .. (18.8,2);
\node at (20.5,1) {$=$};
\node at (23,1) {$q^{-1}$};
\fill (25,0) ellipse (0.8cm and 0.2cm);
\draw[thick] (25.2,0)..controls +(0,1.5) and +(-0.5,-0.5) .. (25.8,2);
\draw[thick] (24.8,0)..controls +(0,1.5) and +(0.5,-0.5) .. (24.2,2);
\end{tikzpicture}
\end{center}
 \end{itemize}
 \end{definition}

\begin{definition}
Let $i \in \{1,\ldots,n-1\}$ and $ 0 \leq p \leq k$. The partial elementary braiding $\Sigma_i^{(k;p)} \in H_{k,n}(q)$ corresponds to the diagram in which the $p$ rightmost strands starting from ellipse $i$ pass over the $p$ leftmost strands starting from ellipse $i+1$. All other strands are vertical.
\end{definition}

They satisfy:
$$\begin{aligned} \Sigma_{i}^{(k ; k)} \Sigma_{i+1}^{(k ; k)} \Sigma_{i}^{(k ; k)} &=\Sigma_{i+1}^{(k ; k)} \Sigma_{i}^{(k ; k)} \Sigma_{i+1}^{(k ; k)} \\ \Sigma_{i}^{(k ; k)} \Sigma_{j}^{(k ; k)} &=\Sigma_{j}^{(k ; k)} \Sigma_{i}^{(k ; k)} \end{aligned} \quad \text { if }|i-j|>1
$$

Some examples:

\begin{center}
 \begin{tikzpicture}[scale=0.23]
\node at (-3,0) {$\Sigma_i^{(2;0)}:=$};
\node at (2,3) {$1$};\fill (2,2) ellipse (0.6cm and 0.2cm);\fill (2,-2) ellipse (0.6cm and 0.2cm);
\draw[thick] (1.8,2) -- (1.8,-2);\draw[thick] (2.2,2) -- (2.2,-2);
\node at (4,0) {$\dots$};
\draw[thick] (5.8,2) -- (5.8,-2);\draw[thick] (6.2,2) -- (6.2,-2);
\node at (6,3) {$i-1$};\fill (6,2) ellipse (0.6cm and 0.2cm);\fill (6,-2) ellipse (0.6cm and 0.2cm);
\node at (10,3) {$i$};\fill (10,2) ellipse (0.6cm and 0.2cm);\fill (10,-2) ellipse (0.6cm and 0.2cm);
\node at (14,3) {$i+1$};\fill (14,2) ellipse (0.6cm and 0.2cm);\fill (14,-2) ellipse (0.6cm and 0.2cm);

\draw[thick] (9.8,2)..controls +(0,-2) and +(0,+2) .. (9.8,-2);
\draw[thick] (10.2,2)..controls +(0,-2) and +(0,+2) .. (10.2,-2);
\fill[white] (12,0) circle (0.5);
\draw[thick] (14.2,2)..controls +(0,-2) and +(0,+2) .. (14.2,-2);
\draw[thick] (13.8,2)..controls +(0,-2) and +(0,+2) .. (13.8,-2);

\draw[thick] (17.8,2) -- (17.8,-2);\draw[thick] (18.2,2) -- (18.2,-2);
\node at (18,3) {$i+2$};\fill (18,2) ellipse (0.6cm and 0.2cm);\fill (18,-2) ellipse (0.6cm and 0.2cm);
\node at (20,0) {$\dots$};
\draw[thick] (21.8,2) -- (21.8,-2);\draw[thick] (22.2,2) -- (22.2,-2);\fill (22,2) ellipse (0.6cm and 0.2cm);\fill (22,-2) ellipse (0.6cm and 0.2cm);
\node at (22,3) {$n$};
\end{tikzpicture}
\end{center}

\begin{center}
 \begin{tikzpicture}[scale=0.23]
\node at (-3,0) {$\Sigma_i^{(2;1)}:=$};
\node at (2,3) {$1$};\fill (2,2) ellipse (0.6cm and 0.2cm);\fill (2,-2) ellipse (0.6cm and 0.2cm);
\draw[thick] (1.8,2) -- (1.8,-2);\draw[thick] (2.2,2) -- (2.2,-2);
\node at (4,0) {$\dots$};
\draw[thick] (5.8,2) -- (5.8,-2);\draw[thick] (6.2,2) -- (6.2,-2);
\node at (6,3) {$i-1$};\fill (6,2) ellipse (0.6cm and 0.2cm);\fill (6,-2) ellipse (0.6cm and 0.2cm);
\node at (10,3) {$i$};\fill (10,2) ellipse (0.6cm and 0.2cm);\fill (10,-2) ellipse (0.6cm and 0.2cm);
\node at (14,3) {$i+1$};\fill (14,2) ellipse (0.6cm and 0.2cm);\fill (14,-2) ellipse (0.6cm and 0.2cm);

\draw[thick] (9.8,2) -- (9.8,-2);
\draw[thick] (13.8,2)..controls +(0,-2) and +(0,+2) .. (10.2,-2);
\fill[white] (12,0) circle (0.4);
\draw[thick] (10.2,2)..controls +(0,-2) and +(0,+2) .. (13.8,-2);
\draw[thick] (14.2,2) -- (14.2,-2);

\draw[thick] (17.8,2) -- (17.8,-2);\draw[thick] (18.2,2) -- (18.2,-2);
\node at (18,3) {$i+2$};\fill (18,2) ellipse (0.6cm and 0.2cm);\fill (18,-2) ellipse (0.6cm and 0.2cm);
\node at (20,0) {$\dots$};
\draw[thick] (21.8,2) -- (21.8,-2);\draw[thick] (22.2,2) -- (22.2,-2);\fill (22,2) ellipse (0.6cm and 0.2cm);\fill (22,-2) ellipse (0.6cm and 0.2cm);
\node at (22,3) {$n$};
\end{tikzpicture}
\end{center}

\begin{center}
 \begin{tikzpicture}[scale=0.23]
\node at (-3,0) {$\Sigma_i^{(2;2)}:=$};
\node at (2,3) {$1$};\fill (2,2) ellipse (0.6cm and 0.2cm);\fill (2,-2) ellipse (0.6cm and 0.2cm);
\draw[thick] (1.8,2) -- (1.8,-2);\draw[thick] (2.2,2) -- (2.2,-2);
\node at (4,0) {$\dots$};
\draw[thick] (5.8,2) -- (5.8,-2);\draw[thick] (6.2,2) -- (6.2,-2);
\node at (6,3) {$i-1$};\fill (6,2) ellipse (0.6cm and 0.2cm);\fill (6,-2) ellipse (0.6cm and 0.2cm);
\node at (10,3) {$i$};\fill (10,2) ellipse (0.6cm and 0.2cm);\fill (10,-2) ellipse (0.6cm and 0.2cm);
\node at (14,3) {$i+1$};\fill (14,2) ellipse (0.6cm and 0.2cm);\fill (14,-2) ellipse (0.6cm and 0.2cm);

\draw[thick] (13.8,2)..controls +(0,-2) and +(0,+2) .. (9.8,-2);
\draw[thick] (14.2,2)..controls +(0,-2) and +(0,+2) .. (10.2,-2);
\fill[white] (12,0) circle (0.5);
\draw[thick] (10.2,2)..controls +(0,-2) and +(0,+2) .. (14.2,-2);
\draw[thick] (9.8,2)..controls +(0,-2) and +(0,+2) .. (13.8,-2);

\draw[thick] (17.8,2) -- (17.8,-2);\draw[thick] (18.2,2) -- (18.2,-2);
\node at (18,3) {$i+2$};\fill (18,2) ellipse (0.6cm and 0.2cm);\fill (18,-2) ellipse (0.6cm and 0.2cm);
\node at (20,0) {$\dots$};
\draw[thick] (21.8,2) -- (21.8,-2);\draw[thick] (22.2,2) -- (22.2,-2);\fill (22,2) ellipse (0.6cm and 0.2cm);\fill (22,-2) ellipse (0.6cm and 0.2cm);
\node at (22,3) {$n$};
\end{tikzpicture}
\end{center}

By Theorem 3.1 of \cite{crampebax}, the following elements of $H_{\mathbf{m},L}(q)$, for $1 \leq i \leq L-1$, 
\begin{equation}\label{Theorem}
\check{R}_{i}^{(k)}(u)=\sum_{p=0}^{k}(-q)^{k-p}\left[\begin{array}{c}k \\ p\end{array}\right]_{q}^{2} \frac{\left(q^{-2} ; q^{-2}\right)_{k-p}}{\left(u q^{-2 p} ; q^{-2}\right)_{k-p}} \Sigma_{i}^{(k ; p)}
\end{equation}
satisfy the braided Yang--Baxter equations, for $i \leq i \leq L-2$,
$$
\check{R}_{i}^{(k)}(u) \check{R}_{i+1}^{(k)}(u v) \check{R}_{i}^{(k)}(v)=\check{R}_{i+1}^{(k)}(v) \check{R}_{i}^{(k)}(u v) \check{R}_{i+1}^{(k)}(v).
$$
The proof of this result uses some additional definitions, which we describe below.

\paragraph{Projected Hecke algebra.}
In the Hecke algebra $H_{nk}(q)$, we consider the elements, for $1\leq i < j \leq nk-1$,
\begin{equation}\label{def-P}
S_{[i,j]}:=\frac{1}{[j-i+1 ]_q!} \ \prod_{i\leq a\leq j-1}^{\longrightarrow}\check R_a(q^{2(a-i+1)})\ \check R_{a-1}(q^{2(a-i)})\ \dots \check R_i(q^2) \ ,
\end{equation} 
where $\check R_a(u)$ is given by equation \eqref{eq:Rhecke} and $\displaystyle \prod_{i\leq a\leq j-1}^{\longrightarrow}$ means that the product is ordered from left to right when the index $a$ increases. This element is the image of $S_{[1,j-i+1]}$ through the embedding of $H_{j-i+1}(q)$ in $H_{nk}(q)$ given by $\sigma_a\mapsto\sigma_{a+i-1}$.

The element $S_{[1,j-i+1]}$ is the so-called $q$-symmetriser of the Hecke algebra $H_{j-i+1}(q)$. In fact, formula (\ref{def-P}) is the simplest case of the fusion formula for 
the Hecke algebra expressing a complete set of primitive idempotents as products of $R$-matrices \cite{IMO}. The $q$-symmetrisers are well-known 
objects (see \cite{IO09,Jimbo1986}). We recall below the main properties that we will use in the following.
A formula alternative to (\ref{def-P}) is the following sum:
\[S_{[i,j]}=\frac{q^{-\frac{(j-i+1)(j-i)}{2}}}{[j-i+1]_q!}\sum_w q^{\ell(w)}\sigma_w\ ,\]
where the sum runs over the set of permutations of the letters $\{i,i+1,\dots,j\}$, the elements $\sigma_w$ are the corresponding standard basis elements of $H_{nk}(q)$, and $\ell(w)$ is the number of crossings in the standard diagram $\sigma_w$ (equivalently, the length of $w$).
The element $S_{[i,j]}$ is a partial $q$-symmetriser satisfying in particular
\begin{equation}\label{rel-sym}
S_{[i,j]}^2=S_{[i,j]}\ \ \ \ \ \text{and}\ \ \ \ \ \ \sigma_a S_{[i,j]}=S_{[i,j]}\sigma_a=q S_{[i,j]}\,,\ a=i,\dots,j-1. 
\end{equation}
This implies that $S_{[i,j]}=S_{[i,j]}S_{[i',j']}$ if $i\leq i'<j'\leq j$. Finally, a recursion formula for these partial $q$-symmetrisers is:
\begin{equation}\label{rec-sym}
S_{[i,j+1]}=\frac{1}{[j-i+2]_q}\sum_{a=i}^{j+1}q^{i-a}\sigma_a\sigma_{a+1}\dots\sigma_j S_{[i,j]}\ .
\end{equation}

Inside the Hecke algebra $H_{nk}(q)$, we consider also the element
\begin{equation}\label{def-Pkn}
P^{(k)}:=S_{[1,k]}S_{[k+1,2k]}\dots  S_{[(n-1)k+1,nk]}\ .
\end{equation}
This is an idempotent (each factor is itself an idempotent and commutes with the others) and it allows us to construct $P^{(k)} H_{nk}(q)P^{(k)}$ which is an algebra with the unit $P^{(k)}$. Here we call this algebra the projected Hecke algebra. 
In \cite{crampe2020fused}, it is proved that the fused Hecke algebra $H_{k,n}(q)$ is isomorphic to the projected Hecke algebra $P^{(k)}H_{nk}(q) P^{(k)}$.

\paragraph{Elementary partial braidings in the projected Hecke algebra}
By using the isomorphism between $P^{(k)} H_{2k}(q)P^{(k)}$ and $H_{k,2}(q)$, the partial elementary braiding $\Sigma^{(k;p)} = \Sigma_1^{(k;p)}$ reads as follows
\begin{equation}\label{eq:Si}
 \Sigma^{(k;p)}= P^{(k)}  (\sigma_k \sigma_{k+1}\dots \sigma_{k+p-1}) (\sigma_{k-1} \sigma_{k}\dots \sigma_{k+p-2})  \dots  (\sigma_{k-p+1} \sigma_{k-p+1}\dots \sigma_{k})  P^{(k)} \ ,
\end{equation}
where $P^{(k)}=S_{[1,k]}S_{[k+1,2k]}$. Recall that $p\in\{0,\dots,k\}$ (when $p=0$, the element is simply $P^{(k)}$). The formula for $\Sigma^{(k;p)}$ is best visualised on the following diagram:
\begin{center}
 \begin{tikzpicture}[scale=0.23]
\node at (-3,0) {$\Sigma^{(k;p)}=$};
\node at (2,5.5) {$1$};\fill (2,4) circle (0.2cm);\fill (2,-4) circle (0.2cm);\draw[thick] (2,4) -- (2,-4);
\node at (4,5.5) {$2$};\fill (4,4) circle (0.2cm);\fill (4,-4) circle (0.2cm);\draw[thick] (4,4) -- (4,-4);
\node at (7,4) {$\dots$};\node at (7,-4) {$\dots$};
\node at (10,5.5) {$k-p$};\fill (10,4) circle (0.2cm);\fill (10,-4) circle (0.2cm);\draw[thick] (10,4) -- (10,-4);
\fill (12,4) circle (0.2cm);\fill (12,-4) circle (0.2cm);
\node at (14.5,4) {$\dots$};\node at (16.5,-4) {$\dots$};
\fill (17,4) circle (0.2cm);\fill (14,-4) circle (0.2cm);
\node at (19,5.5) {$k$};\fill (19,4) circle (0.2cm);\fill (19,-4) circle (0.2cm);

\fill (21,4) circle (0.2cm);\fill (21,-4) circle (0.2cm);
\fill (23,4) circle (0.2cm);\fill (26,-4) circle (0.2cm);
\node at (25.5,4) {$\dots$};\node at (23.5,-4) {$\dots$};
\node at (28,5.5) {$k+p$};\fill (28,4) circle (0.2cm);\fill (28,-4) circle (0.2cm);
\fill (30,4) circle (0.2cm);\fill (30,-4) circle (0.2cm);\draw[thick] (30,4) -- (30,-4);
\node at (33,4) {$\dots$};\node at (33,-4) {$\dots$};
\fill (36,4) circle (0.2cm);\fill (36,-4) circle (0.2cm);\draw[thick] (36,4) -- (36,-4);
\node at (38,5.5) {$2k$};\fill (38,4) circle (0.2cm);\fill (38,-4) circle (0.2cm);\draw[thick] (38,4) -- (38,-4);

\draw[thick] (21,4)..controls +(0,-2) and +(0,+2) .. (12,-4);
\draw[thick] (23,4)..controls +(0,-2) and +(0,+2) .. (14,-4);
\draw[thick] (28,4)..controls +(0,-2) and +(0,+2) .. (19,-4);

\fill[white] (20,2.7) circle (0.3);

\fill[white] (20,1.1) circle (0.3);
\fill[white] (20,-2.5) circle (0.3);
\fill[white] (21,1.8) circle (0.3);\fill[white] (19,1.8) circle (0.3);
\fill[white] (16.4,0.1) circle (0.3);\fill[white] (17.5,-0.6) circle (0.3);
\fill[white] (23.6,0.1) circle (0.3);\fill[white] (22.5,-0.6) circle (0.3);

\draw[thick] (19,4)..controls +(0,-2) and +(0,+2) .. (28,-4);
\draw[thick] (17,4)..controls +(0,-2) and +(0,+2) .. (26,-4);
\draw[thick] (12,4)..controls +(0,-2) and +(0,+2) .. (21,-4);

\fill[fill=gray,opacity=0.3] (10,4) ellipse (9.5cm and 0.6cm);
\fill[fill=gray,opacity=0.3] (10,-4) ellipse (9.5cm and 0.6cm);
\fill[fill=gray,opacity=0.3] (30,4) ellipse (9.5cm and 0.6cm);
\fill[fill=gray,opacity=0.3] (30,-4) ellipse (9.5cm and 0.6cm);

\end{tikzpicture}
\end{center}
The 4 shaded ellipses surrounding the dots $1,2, \dots k$ and $k+1,\dots,2k$ in the previous diagram represent the $q$-symmetrisers $S_{[1,k]}$ and $S_{[k+1,2k]}$.

More generally, let $l\geq k$ and define the following elements in $H_{k+\ell}(q)$, for $0\leq p\leq k$,
\begin{equation}\label{wut}
 \Sigma^{(k,\ell;p)}= P^{(k,\ell)}  (\sigma_k \sigma_{k+1}\dots \sigma_{\ell+p-1}) (\sigma_{k-1} \sigma_{k}\dots \sigma_{\ell+p-2})  \dots  (\sigma_{k-p+1} \sigma_{k-p+2}\dots \sigma_{\ell})  P^{(\ell,k)} \ ,
\end{equation}
where $P^{(k,\ell)}=S_{[1,k]}S_{[k+1,k+\ell]}$ and $P^{(\ell,k)}=S_{[1,\ell]}S_{[\ell+1,k+\ell]}$.
We get $\Sigma^{(k,k;p)}=\Sigma^{(k;p)}$.
These elements are visualised on the following diagrams:
\begin{center}
 \begin{tikzpicture}[scale=0.23]
\node at (-5,0) {$\Sigma^{(k,\ell;p)}=$};
\node at (2,5.5) {$1$};\fill (2,4) circle (0.2cm);\fill (2,-4) circle (0.2cm);\draw[thick] (2,4) -- (2,-4);
\node at (4,5.5) {$2$};\fill (4,4) circle (0.2cm);\fill (4,-4) circle (0.2cm);\draw[thick] (4,4) -- (4,-4);
\node at (7,4) {$\dots$};\node at (7,-4) {$\dots$};
\node at (10,5.5) {$k-p$};\fill (10,4) circle (0.2cm);\fill (10,-4) circle (0.2cm);\draw[thick] (10,4) -- (10,-4);
\fill (12,4) circle (0.2cm);\fill (12,-4) circle (0.2cm);
\node at (14.5,4) {$\dots$};\node at (19,-4) {$\dots$};
\fill (17,4) circle (0.2cm);\fill (14,-4) circle (0.2cm);
\node at (19,5.5) {$k$};\fill (19,4) circle (0.2cm);\fill (32,-4) circle (0.2cm);

\fill (21,4) circle (0.2cm);\fill (25,-4) circle (0.2cm);
\fill (23,4) circle (0.2cm);\fill (27,-4) circle (0.2cm);
\node at (28,4) {$\dots$};\node at (30,-4) {$\dots$};
\node at (34,5.5) {$\ell+p$};\fill (34,4) circle (0.2cm);\fill (34,-4) circle (0.2cm);
\fill (36,4) circle (0.2cm);\fill (36,-4) circle (0.2cm);\draw[thick] (36,4) -- (36,-4);
\node at (39,4) {$\dots$};\node at (39,-4) {$\dots$};
\fill (42,4) circle (0.2cm);\fill (42,-4) circle (0.2cm);\draw[thick] (42,4) -- (42,-4);
\node at (44,5.5) {$k+\ell$};\fill (44,4) circle (0.2cm);\fill (44,-4) circle (0.2cm);\draw[thick] (44,4) -- (44,-4);

\node at (25,-5.5) {$\ell$};

\draw[thick] (21,4)..controls +(0,-2) and +(0,+2) .. (12,-4);
\draw[thick] (23,4)..controls +(0,-2) and +(0,+2) .. (14,-4);
\draw[thick] (34,4)..controls +(0,-2) and +(0,+2) .. (25,-4);

\fill[white] (20.2,2.8) circle (0.3);
\fill[white] (20.7,1.6) circle (0.3);
\fill[white] (25.7,-2.8) circle (0.3);
\fill[white] (21.5,2.2) circle (0.3);\fill[white] (19.5,2.2) circle (0.3);
\fill[white] (17.6,0.8) circle (0.3);\fill[white] (18.9,0.2) circle (0.3);
\fill[white] (28.4,-0.7) circle (0.3);\fill[white] (27.6,-1.3) circle (0.3);

\draw[thick] (19,4)..controls +(0,-2) and +(0,+2) .. (34,-4);
\draw[thick] (17,4)..controls +(0,-2) and +(0,+2) .. (32,-4);
\draw[thick] (12,4)..controls +(0,-2) and +(0,+2) .. (27,-4);

\fill[fill=gray,opacity=0.3] (9.8,4) ellipse (10cm and 0.6cm);
\fill[fill=gray,opacity=0.3] (12.7,-4) ellipse (13cm and 0.6cm);
\fill[fill=gray,opacity=0.3] (33.2,4) ellipse (13cm and 0.6cm);
\fill[fill=gray,opacity=0.3] (36.2,-4) ellipse (10cm and 0.6cm);

\end{tikzpicture}
\end{center}

\paragraph{Product of $R$-matrices.}

Let us now define, in $H_{k+\ell}(q) $,
\begin{equation}
 \check R^{(k,\ell)}(u) := P^{(k,\ell)} \prod_{1\leq a\leq k}^{\longleftarrow} \check R_a\left(uq^{2(1-a)}\right)\check R_{a+1}\left(uq^{2(2-a)}\right) \dots \check R_{a+\ell-1}\left(uq^{2(\ell-a)}\right)   P^{(\ell,k)}\ ,
 \label{eq:defkl}
\end{equation}
where  $\check R_a\left(u\right)$ is given by \eqref{eq:Rhecke}. The arrow means that the product is ordered from right to left when the index $a$ increases. It is a straightforward calculation, using repeatedly the braided Yang--Baxter equation satisfied by $\check R_i(u)$, to show that:
\begin{equation}\label{commPR}
P^{(k,\ell)}\check R^{(k,\ell)}(u)=\check R^{(k,\ell)}(u)P^{(\ell,k)}\ .
\end{equation}
As a consequence, only one idempotent (any one of the two) is actually needed in (\ref{eq:defkl}). 

\paragraph{Representations of the Hecke algebra} The Hecke algebra $H_n(q)$ has a representation on the vector space $(\mathbb{C}^{N})^{\otimes n}$. By abuse of notation, define the $R$--matrix $\check{R}$ 
from (12) of \cite{crampe2020fused} by 
$$
\check{R}\left(e_{i} \otimes e_{j}\right):=\left\{\begin{array}{ll}q e_{i} \otimes e_{j} & \text { if } i=j \\ e_{j} \otimes e_{i}+\left(q-q^{-1}\right) e_{i} \otimes e_{j} & \text { if } i<j, \quad \text { where } i, j=1, \ldots, N \\ e_{j} \otimes e_{i} & \text { if } i>j\end{array}\right.
$$
We write this as
$$
\check{R} = \sum_{i>j} E_{j,i} \otimes E_{i,j} + \sum_{i<j} E_{j,i} \otimes E_{i,j}+(q-q^{-1})\sum_{i<j} E_{i,i} \otimes E_{j,j}+ q\sum_i  E_{i,i} \otimes E_{i,i}
$$
Let $\check{R}_{i,i+1}$ denote the $R$--matrix acting on the $i,i+1$ tensor powers. Then $\sigma_i \mapsto R_{i,i+1}$ defines a representation of $H_n(q)$. 

More generally, as explained in section 5 of \cite{crampe2020fused}, this extends to a representation of $H_{\mathbf{k},n}$ of $L_{(k_1)}^N \otimes \cdots \otimes L{(k_n)}^N$, where $L_{(k)}$ is the irreducible representation of $U_q(\mathfrak{gl}_N)$ corresponding to the partition $(k)$, the one--line partition with $k$ boxes.

\section{Stochastic variants}\label{SV}
The main result of this paper is a stochastic version of the formula \eqref{Theorem}. To this end, we define stochastic versions of the Hecke algebra generators and symmetrizers. We use the accent $\circ$ to indicate a stochastic version.

If $\accentset{\circ}{\sigma}_i$ denotes $-q \sigma_i$, then $H_n(q)$ is generated by the elements $\accentset{\circ}{\sigma}_1, \ldots, \accentset{\circ}{\sigma}_{n-1}$ with relations
$$
\begin{array}{ll}
\accentset{\circ}{\sigma}_{i}^{2}=\left(1-q^2\right) \accentset{\circ}{\sigma}_i+q^2 & \text { for } i \in\{1, \ldots, n-1\} \\
 \accentset{\circ}{\sigma}_{i} \accentset{\circ}{\sigma}_{i+1} \accentset{\circ}{\sigma}_{i}=\accentset{\circ}{\sigma}_{i+1} \accentset{\circ}{\sigma}_{i} \accentset{\circ}{\sigma}_{i+1} & \text { for } i \in\{1, \ldots, n-2\} \\ \accentset{\circ}{\sigma}_{i} \accentset{\circ}{\sigma}_{j}=\accentset{\circ}{\sigma}_{j} \accentset{\circ}{\sigma}_{i} & \text { for } i, j \in\{1, \ldots, n-1\} \text { such that }|i-j|>1\end{array}$$
 The first relation is now ``stochastic'' in the sense that the coefficients sum to $1$ and are non--negative for $q \in [0,1]$.
 
By \eqref{eq:Rhecke}, the element
\begin{equation}\label{eq:Shecke}
\accentset{\circ}{R}_i(u) := \frac{-q(1-u)}{q^2-u} \check{R}_{i}(u) = \frac{-q(1-u)}{q^2-u} \sigma_i+\frac{q^2-1}{q^2-u} =  \frac{1-u}{q^2-u}\accentset{\circ}{\sigma}_i+\frac{q^2-1}{q^2-u} 
\end{equation}
satisfies the Yang--Baxter equation. Note that the coefficients sum to $1$. 

Define the stochastic symmetrizer by
\begin{equation}\label{def-S}
\accentset{\circ}{S}_{[i,j]}:=\prod_{i\leq a\leq j-1}^{\longrightarrow}\accentset{\circ}{R}_a(q^{-2(a-i+1)})\ \accentset{\circ}{R}_{a-1}(q^{-2(a-i)})\ \dots \accentset{\circ}{R}_i(q^{-2}) \ ,
\end{equation} 
so for example
$$
\accentset{\circ}{S}_{[i,i]}=1, \quad \accentset{\circ}{S}_{[i,i+1]} =   \frac{q^{-1}}{q+q^{-1}}\accentset{\circ}{\sigma}_i+\frac{q}{q+q^{-1}}  
$$
and
\begin{align*}
\accentset{\circ}{S}_{[i,i+2]} &= \accentset{\circ}{R}_i(q^{-2})\accentset{\circ}{R}_{i+1}(q^{-4}) \accentset{\circ}{R}_i(q^{-2}) \\
&= \frac{q^{-2}(1-q^{-4})}{(q+q^{-1})^2(q^2-q^{-4})}\left( \accentset{\circ}{\sigma}_i \accentset{\circ}{\sigma}_{i+1} \check{\sigma_i} + q^2 \accentset{\circ}{\sigma}_i \accentset{\circ}{\sigma}_{i+1}  + q^2 \accentset{\circ}{\sigma}_{i+1} \accentset{\circ}{\sigma}_{i} + q^4 \accentset{\circ}{\sigma}_i + q^4\accentset{\circ}{\sigma}_{i+1}+q^6  \right).
\end{align*}
Notice in each example that the coefficients sum to $1$. An alternative formula is
$$
\accentset{\circ}{S}_{[i,j]} = \frac{q^{(j-i+1)(j-i)}}{[j-i+1]_{q^2}!} \sum_{w} q^{-2l(w)}\accentset{\circ}{\sigma}_w
$$
where the sum is over all permutations of $\{i,i+1,\ldots,j\}$, the elements $\accentset{\circ}{\sigma}_w$ are the standard basis elements, and $l(w)$ is the length of the permutation $w$. A recursive formula is
\begin{equation}\label{recur}
\accentset{\circ}{S}_{[i,j+1]} = \frac{1}{[j-i+2]_{q^2}} \sum_{a=i}^{j+1} q^{2(a-i)} \accentset{\circ}{\sigma}_a \accentset{\circ}{\sigma}_{a+1} \cdots \accentset{\circ}{\sigma}_j \accentset{\circ}{S}_{[i,j]}.
\end{equation}
Furthermore define $\accentset{\circ}{P}^{(k,\ell)}=\accentset{\circ}{S}_{[1,k]}\accentset{\circ}{S}_{[k+1,k+\ell]}$ and $\accentset{\circ}{P}^{(\ell,k)}=\accentset{\circ}{S}_{[1,\ell]}\accentset{\circ}{S}_{[\ell+1,k+\ell]}$.

Define analogously 
\begin{equation}
 \accentset{\circ}{R}^{(k,\ell)}(u) := \accentset{\circ}{P}^{(k,\ell)} \prod_{1\leq a\leq k}^{\longleftarrow} \accentset{\circ}{R}_a\left(uq^{-2(1-a)}\right)\accentset{\circ}{R}_{a+1}\left(uq^{-2(2-a)}\right) \dots \accentset{\circ}{R}_{a+\ell-1}\left(uq^{-2(\ell-a)}\right)   \accentset{\circ}{P}^{(\ell,k)}\ ,
 \label{eq:defkls}
\end{equation}
As with the elements $\check{R}^{(k,\ell)}(u)$, the braided Yang--Baxter equation is satisfied:
$$
\accentset{\circ}{R}_{i}^{(k)}(u) \accentset{\circ}{R}_{i+1}^{(k)}(u v) \accentset{\circ}{R}_{i}^{(k)}(v)=\accentset{\circ}{R}_{i+1}^{(k)}(v) \accentset{\circ}{R}_{i}^{(k)}(u v) \accentset{\circ}{R}_{i+1}^{(k)}(v).
$$
It is a straightforward application of the braided Yang--Baxter equation satisfied by $\accentset{\circ}{R}_i(u)$ that
\begin{equation}\label{proj}
\accentset{\circ}{P}^{(k, \ell)} \accentset{\circ}{R}^{(k, \ell)}(u)=\accentset{\circ}{R}^{(k, \ell)}(u)\accentset{\circ}{P}^{(\ell, k)}
\end{equation}
Also define
\begin{equation}\label{partial}
 \accentset{\circ}{\Sigma}^{(k,\ell;p)}= \accentset{\circ}{P}^{(k,\ell)}  (\accentset{\circ}{\sigma}_k \accentset{\circ}{\sigma}_{k+1}\dots \accentset{\circ}{\sigma}_{\ell+p-1}) (\accentset{\circ}{\sigma}_{k-1} \accentset{\circ}{\sigma}_{k}\dots \accentset{\circ}{\sigma}_{\ell+p-2})  \dots  (\accentset{\circ}{\sigma}_{k-p+1} \accentset{\circ}{\sigma}_{k-p+2}\dots \accentset{\circ}{\sigma}_{\ell})  \accentset{\circ}{P}^{(\ell,k)}.
\end{equation}

Before stating the main theorem, we define some $q$--notation. Define the $q$--deformed integer by 
$$
[n]_{q^2} = \frac{1- q^{2n}}{1-q^2},
$$
the $q$--deformed factorial and binomial by 
$$
[n]_{q^2}^! = [1]_{q^2} \cdots [n]_{q^2}, \quad \quad \binom{n}{m}_{q^2} = \frac{[n]_{q^2}^!}{[m]_{q^2}^! [n-m]_{q^2}^!},
$$
and the $q$--Pochhammer symbol
$$
(a;q^2)_n = (1-a)(1-aq^2) \cdots (1-aq^{n-1}).
$$
The $q$--binomials satisfy
\begin{equation}\label{bin}
\binom{k-1}{p-1}_{q^2}  q^{2(k-p)} + \binom{k-1}{p}_{q^2} = \binom{k}{p}_{q^2}, \quad \quad
\binom{k-1}{p-1}_{q^2} + \binom{k-1}{p}_{q^2} q^{2p} = \binom{k}{p}_{q^2}
\end{equation}
The paper \cite{Po} defines weights depending on parameters $q,\mu,\nu$ by 
$$ 
\varphi(p \vert k) = \binom{k}{p}_{q^2} \mu^p \frac{ (\mu;q^2)_{k-p}  (\nu/\mu;q^2)_p }{ (\nu;q^2)_{p}  } 
$$
These weights are stochastic in the sense that
$$
\varphi(0 \vert k) + \varphi(1 \vert k ) + \cdots + \varphi(k \vert k) = 1
$$
and all $\varphi(p \vert k)$ are non--negative for the range of values $\vert q^2 \vert <1$ and $ 0 \leq \nu \leq \mu$. These weights also occur in the $q$--Hahn exclusion process \cite{veto2015,Corwin2015TheQB}.

The main theorem can now be stated.

\begin{theorem}
For $0 \leq p \leq k$ and $1 \leq k \leq \ell$, define $a_p^{(k,\ell)}(z)$ by 
\begin{align*}
a_p^{(k,l)}(z) &= \binom{k}{p}_{q^2} \frac{(q^{2l}-1)(q^{2l}-q^2)\cdots ( q^{2l}-q^{2(k-p-1)}) (1-z)(1-zq^2)\cdots (1-zq^{2(p-1)} )  }{(q^{2l}-z)(q^{2l}-zq^2)\cdots (q^{2l}-zq^{2(k-1)})}\\
&=  \binom{k}{p}_{q^2} q^{-2lp} \frac{(q^{-2l};q^2)_{k-p}(z;q^2)_p}{(zq^{-2l};q^2)_k}.
\end{align*}
In other words, $a_p^{(k,\ell)}(z) = \varphi(p \vert k)$ for $\mu = q^{-2\ell},\nu=zq^{-2\ell}$.
Then 
$$
\accentset{\circ}{R}^{(k, \ell)}(u)=\sum_{p=0}^{k} a_{p}^{(k, \ell)}(u) \accentset{\circ}{\Sigma}^{(k, \ell ; p)}.
$$
\end{theorem}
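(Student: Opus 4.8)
\emph{Plan.} The guiding observation is that the stochastic construction is, up to explicit scalar factors, the non-stochastic construction of \cite{crampebax} carried out at parameter $q^{-1}$. Set $\sigma'_i := -\sigma_i$; then $\sigma_i'^2 = (q^{-1}-q)\sigma'_i + 1$, so the $\sigma'_i$ are the standard Hecke generators at parameter $q^{-1}$, and by definition $\accentset{\circ}{\sigma}_i = -q\sigma_i = q\sigma'_i$. From \eqref{eq:Rhecke} one reads off $\check{R}_i(u) = -\check{R}'_i(u)$, where $\check R'_i$ denotes the $q^{-1}$-version of \eqref{eq:Rhecke}, and hence by \eqref{eq:Shecke}
\[
\accentset{\circ}{R}_i(u) = \frac{-q(1-u)}{q^2-u}\,\check{R}_i(u) = \frac{q(1-u)}{q^2-u}\,\check{R}'_i(u).
\]
The first thing I would establish is that the stochastic symmetriser coincides with the $q^{-1}$-symmetriser, $\accentset{\circ}{S}_{[i,j]} = S'_{[i,j]}$ (equivalently, $\accentset{\circ}{S}_{[i,j]}$ is the $q$-\emph{anti}symmetriser of $H_n(q)$). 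Indeed the arguments $q^{-2(\cdots)}$ in \eqref{def-S} are exactly those of the $q^{-1}$-version of \eqref{def-P}, so $\accentset{\circ}{S}_{[i,j]}$ is \emph{a priori} a scalar multiple of $S'_{[i,j]}$; both are idempotent (idempotency of $\accentset{\circ}{S}_{[i,j]}$ following from the stochastic analogue $\accentset{\circ}{\sigma}_a\accentset{\circ}{S}_{[i,j]} = \accentset{\circ}{S}_{[i,j]}$ of \eqref{rel-sym}), so the scalar is forced to be $1$. Consequently $\accentset{\circ}{P}^{(k,\ell)} = P'^{(k,\ell)}$ and $\accentset{\circ}{P}^{(\ell,k)} = P'^{(\ell,k)}$.

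With this identification, both $\accentset{\circ}{\Sigma}^{(k,\ell;p)}$ and $\accentset{\circ}{R}^{(k,\ell)}(u)$ are explicit scalar multiples of their $q^{-1}$-counterparts $\Sigma'^{(k,\ell;p)}$ and $\check{R}'^{(k,\ell)}(u)$. The braiding word in \eqref{partial} contains $p(\ell+p-k)$ generators, each contributing a factor $q$ under $\accentset{\circ}{\sigma}_i = q\sigma'_i$, so $\accentset{\circ}{\Sigma}^{(k,\ell;p)} = q^{\,p(\ell+p-k)}\,\Sigma'^{(k,\ell;p)}$. Comparing \eqref{eq:defkls} with \eqref{eq:defkl}, the $k\ell$ elementary factors each contribute the scalar above, giving $\accentset{\circ}{R}^{(k,\ell)}(u) = \gamma(u)\,\check{R}'^{(k,\ell)}(u)$, with $\gamma(u)$ the product of the $k\ell$ prefactors $\tfrac{q(1-v)}{q^2-v}$ at the arguments $v = uq^{-2(\cdots)}$ of \eqref{eq:defkls}. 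I would then invoke the Baxterisation formula \eqref{Theorem} of \cite{crampebax}, in general-spin form at parameter $q^{-1}$,
\[
\check{R}'^{(k,\ell)}(u) = \sum_{p=0}^{k} c_p^{(k,\ell)}(u)\,\Sigma'^{(k,\ell;p)},
\]
where $c_p^{(k,\ell)}$ is the $q\mapsto q^{-1}$ image of the coefficient in \eqref{Theorem}. For $k<\ell$ this general-spin version is obtained either by the same fusion argument as in \cite{crampebax} or by an induction on $\ell$ built on the product \eqref{eq:defkl}; the a priori existence of such an expansion is guaranteed by \eqref{proj}.

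Dividing by the scalars yields $a_p^{(k,\ell)}(u) = \gamma(u)\,q^{-p(\ell+p-k)}\,c_p^{(k,\ell)}(u)$, and the remaining task is a pure $q$-calculus identity: checking that this equals $\varphi(p\vert k)$ with $\mu = q^{-2\ell}$, $\nu = uq^{-2\ell}$. This is precisely where the Pascal-type relations \eqref{bin} for the $q$-binomials enter, together with telescoping of the Pochhammer symbols in $\gamma(u)$ and in $c_p^{(k,\ell)}$. A useful check is stochasticity: since $\accentset{\circ}{R}^{(k,\ell)}(u)$ is assembled from the coefficient-sum-one elements $\accentset{\circ}{R}_i$ and symmetrisers, the coefficients must satisfy $\sum_{p=0}^{k} a_p^{(k,\ell)}(u) = 1$, which pins down the normalisation $\gamma(u)$ and matches the known stochasticity of the $q$-Hahn weights. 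The main obstacle is the scalar bookkeeping --- confirming $\accentset{\circ}{S}_{[i,j]} = S'_{[i,j]}$ and evaluating $\gamma(u)$ --- and then collapsing the resulting products of Pochhammer symbols and $q$-binomials into the single clean weight $\varphi(p\vert k)$; throughout, the $q^{2}\mapsto q^{-2}$ flip in every spectral argument, which is exactly the passage to parameter $q^{-1}$, must be tracked consistently.
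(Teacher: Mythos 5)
Your route is sound and genuinely different from the paper's. The paper proves the theorem from scratch: it establishes the recursion \eqref{A0}--\eqref{A4} characterising $a_p^{(k,\ell)}(z)$ and then re-runs the inductive computation of \cite{crampebax} (induction on $k$ and $\ell$, using \eqref{recur}, the absorption identities and \eqref{proj}) directly with the stochastic elements. You instead observe that $\accentset{\circ}{\sigma}_i=q\sigma_i'$ with $\sigma_i'=-\sigma_i$ the Hecke generators at parameter $q^{-1}$, identify $\accentset{\circ}{S}_{[i,j]}$ with the $q^{-1}$-symmetriser by an idempotency/normalisation argument (which is valid: $\accentset{\circ}{S}_{[i,j]}$ is a scalar multiple of a nonzero idempotent and is itself idempotent, since its coefficients sum to one and it absorbs the $\accentset{\circ}{\sigma}_a$ on both sides), and then transfer the non-stochastic Baxterisation by explicit scalar bookkeeping. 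I checked your dictionary on the base case $k=\ell=1$: $\gamma(u)=\tfrac{q(1-u)}{q^2-u}$ and $q^{-p(\ell+p-k)}$ reproduce \eqref{A0} exactly, so the scalars are right. What your approach buys is economy and a conceptual explanation of why the stochastic weights are a $q\mapsto q^{-1}$, rescaled image of the coefficients in \eqref{Theorem}; what the paper's buys is self-containedness plus the recursion \eqref{A0}--\eqref{A4} as a by-product, which is reused later (e.g.\ to prove stochasticity of the weights). Two caveats carry real weight in your plan. First, you invoke a general-$(k,\ell)$ version of \eqref{Theorem}; the paper only quotes the $k=\ell$ case, so you are relying on \cite{crampebax} containing the $(k,\ell)$ intermediate lemma (it does, as the machinery \eqref{eq:defkl}, \eqref{wut} indicates), otherwise you would have to reprove it by essentially the paper's induction. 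Second, the final step --- telescoping the double product $\gamma(u)=\prod_{a=1}^{k}\prod_{b=1}^{\ell}\tfrac{q(1-uq^{-2(b-a)})}{q^{2}-uq^{-2(b-a)}}$ against the $q^{-1}$-image of the coefficients in \eqref{Theorem} and matching $\varphi(p\,\vert\,k)$ --- is the bulk of the remaining computation and is only asserted; it is routine but should be written out, with the stochasticity check $\sum_p a_p^{(k,\ell)}(u)=1$ used only as a consistency test rather than as the derivation of $\gamma(u)$.
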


The remainder of the subsection is devoted to the proof of the Theorem.

Rather than using the closed--form expression for $a_p^{(k,\ell)}(z)$, we show that it satisfies a recursion relation and use this relation for the proof. 
\begin{lemma}
For $0 \leq p \leq k$ and $1 \leq k \leq \ell$, the coefficients $a_p^{(k,\ell)}(z)$ satisfy
\begin{equation}\label{A0}
a_0^{(1,1)}(z) = \frac{q^2-1}{q^2-z}, \quad a_1^{(1,1)}(z)= \frac{1-z}{q^2-z}
\end{equation}
and the relations
\begin{align}
&a_0^{(1,\ell)}(z) =
 a_0^{(1,\ell-1)}(z) + a_1^{(1,\ell-1)}(z) \frac{q^2-1}{q^2-z q^{2(\ell-1)}}, \quad \quad a_1^{(1,\ell)}(z) = a_1^{(1,\ell-1)}(z) \frac{ 1-zq^{2(\ell-1)} }{ q^2-zq^{2(\ell-1)} }\label{A1},\\
& a_0^{(k,\ell)}(z) = q^{2(k-1)} \frac{[\ell-k+1]_{q^2}}{[\ell]_{q^2}} a_0^{(k-1,\ell)}(zq^{-2}) a_0^{(1,\ell)}(z), \label{A2}\\
& a_{p}^{(k,\ell)}(z) = a_{p-1}^{(k-1,\ell)}(zq^{-2})\left( a_1^{(1,\ell)}(z) + \frac{[k-p]_{q^2}}{[\ell]_{q^2}}a_0^{(1,\ell)}(z) \right) + a_{p}^{(k-1,\ell)}(zq^{-2}) \cdot q^{2(k-p-1)}\frac{[\ell+p-k+1]_{q^2}}{[\ell]_{q^2}} a_0^{(1,\ell)}(z),  1 \leq p \leq k-1, \label{A3}\\
&a_k^{(k,\ell)}(z) = a_{k-1}^{(k-1,\ell)}(zq^{-2}) a_1^{(1,\ell)}(z), \label{A4}
\end{align}
\end{lemma}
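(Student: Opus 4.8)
The plan is to prove the Lemma by substituting the closed form for $a_p^{(k,\ell)}(z)$ from the Theorem into each of \eqref{A0}--\eqref{A4} and checking every relation as an identity of rational functions of $z$. The only tools needed are the telescoping behaviour of the $q$--Pochhammer symbols under shifts of the indices $k,p$ and of the argument, together with the two $q$--Pascal identities recorded in \eqref{bin}.

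First I would dispose of the elementary cases. The base case \eqref{A0} is an immediate specialisation to $k=\ell=1$. For \eqref{A1} one sets $k=1$, so that both $a_0^{(1,\ell)}$ and $a_1^{(1,\ell)}$ are single--factor ratios and each relation becomes, after clearing denominators, a linear identity in $z$ that is checked directly. The boundary relation \eqref{A4} follows by peeling the leading factor off each Pochhammer symbol via $(z;q^2)_k=(1-z)(zq^2;q^2)_{k-1}$ and $(zq^{-2\ell};q^2)_k=(1-zq^{-2\ell})(zq^{-2\ell+2};q^2)_{k-1}$: the product $a_{k-1}^{(k-1,\ell)}(\cdot)\,a_1^{(1,\ell)}(z)$ then telescopes exactly onto $a_k^{(k,\ell)}(z)$, the $q$--binomials being trivial. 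The $p=0$ relation \eqref{A2} is of the same type, the one extra point being to simplify the scalar prefactor $q^{2(k-1)}[\ell-k+1]_{q^2}/[\ell]_{q^2}$; writing $[n]_{q^2}=(1-q^{2n})/(1-q^2)$ and $1-q^{-2\ell}=-q^{-2\ell}(1-q^{2\ell})$, this prefactor supplies exactly the factor reconciling the $(k-1)$-- and $k$--indexed numerator Pochhammer symbols.

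The substance of the Lemma is the interior relation \eqref{A3}, and this is where \eqref{bin} is used. I would multiply through by the common denominator $(zq^{-2\ell};q^2)_k$ and cancel the common power of $q^{-2\ell p}$; the factorisation $(zq^{-2\ell};q^2)_k=(1-zq^{-2\ell})(zq^{-2\ell+2};q^2)_{k-1}$ absorbs the denominators of $a_0^{(1,\ell)}(z)$ and $a_1^{(1,\ell)}(z)$. After simplifying the two $q$--integer prefactors (again via $[n]_{q^2}=(1-q^{2n})/(1-q^2)$) they degenerate to the clean constants $q^{2(k-p)}-1$ and $1-q^{2(k-p-1-\ell)}$, and the latter is precisely the trailing factor that upgrades $(q^{-2\ell};q^2)_{k-1-p}$ to $(q^{-2\ell};q^2)_{k-p}$. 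At this point every term shares the factor $(q^{-2\ell};q^2)_{k-p}(zq^2;q^2)_{p-1}$, and \eqref{A3} reduces to the single scalar identity
\[
\binom{k}{p}_{q^2}(1-z)=\binom{k-1}{p-1}_{q^2}\bigl(q^{2(k-p)}-z\bigr)+\binom{k-1}{p}_{q^2}\bigl(1-zq^{2p}\bigr).
\]
Matching the constant terms is the first identity of \eqref{bin}, and matching the coefficients of $z$ is the second identity of \eqref{bin}, which finishes \eqref{A3}.

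The main obstacle is bookkeeping rather than conceptual: one must track carefully how the symbols $(q^{-2\ell};q^2)_{k-p}$, $(z;q^2)_p$ and $(zq^{-2\ell};q^2)_k$ transform under the simultaneous shifts $k\mapsto k-1$, $p\mapsto p-1$ and the rescaling of the argument in the $(k-1,\ell)$--terms, and confirm that the $q$--integer prefactors degenerate to exactly the linear factors $q^{2(k-p)}-z$ and $1-zq^{2p}$ that expose \eqref{bin}. Once \eqref{A3} is brought to the displayed degree--one polynomial identity in $z$, the two coefficient comparisons are precisely the two lines of \eqref{bin}.
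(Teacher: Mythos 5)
Your proposal is correct and is essentially the paper's own argument: the paper likewise handles the $k=1$, $p=0$ and $p=k$ cases by telescoping products and reduces the interior relation \eqref{A3} to exactly the identity $\binom{k}{p}_{q^2}(1-z)=\binom{k-1}{p-1}_{q^2}\bigl(q^{2(k-p)}-z\bigr)+\binom{k-1}{p}_{q^2}\bigl(1-zq^{2p}\bigr)$, proved from \eqref{bin} by matching the constant term and the coefficient of $z$. (One small remark: your common factor $(zq^{2};q^{2})_{p-1}$ shows you are in effect evaluating the $(k-1,\ell)$ terms at $zq^{2}$, which is also what the paper's own displayed computation does, notwithstanding the $zq^{-2}$ printed in \eqref{A2}--\eqref{A4}; this sign convention should be fixed consistently but does not affect the substance of the argument.)
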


\begin{proof}
Note that the equations \eqref{A0}--\eqref{A4} define $a_p^{(k,\ell)}(z)$ uniquely. So suppose that $b_p^{(k,\ell)}(z)$ also satisfy equations \eqref{A0}--\eqref{A4}. It suffices to l show that $b_p^{(k,\ell)}(z) = a_p^{(k,\ell)}(z)$. 

For $l=1$ it is immediate. By the second equation in \eqref{A1} and a telescoping product, we then have
\begin{equation}\label{M1}
a_1^{(1,\ell)}(z) = b_1^{(1,\ell)}(z).
\end{equation}
By a straightforward induction argument on $\ell$ using \eqref{A1}, it is seen that $b_0^{(1,\ell)}(z) + b_1^{(1,\ell)}(z)=1$ for all $\ell \geq 1$. Therefore
\begin{equation}\label{M2}
b_0^{(1,\ell)}(z) = 1 - b_1^{(1,\ell)}(z) = 1- a_1^{(1,\ell)}(z) = a_0^{(1,\ell)}(z).
\end{equation}

For $p=k$, we have by repeatedly applying \eqref{A4} and \eqref{M1}.
\begin{align*}
b_k^{(k,\ell)}(z) &= b_{k-1}^{(k-1,\ell)}(zq^{-2}) b_1^{(1,\ell)}(z) \\
&= b_1^{(1,\ell)}(z) b_1^{(1,\ell)}(zq^{-2})\cdots b_1^{(1,\ell)}(zq^{-2(k-1)})\\
&= \frac{(1-z)\cdots (1-zq^{2(k-1)})}{(q^{2\ell}-z)\cdots (q^{2\ell}-zq^{2(k-1)})},
\end{align*}
which equals $a_k^{(k,\ell)}(z)$.

Similarly, for $p=0$, repeatedly applying \eqref{A2} and \eqref{M2},
\begin{align*}
b_0^{(k,\ell)}(z) &= q^2 q^4 \cdots q^{2(k-1)} \frac{[\ell-k+1]_{q^2}[\ell-k+2]_{q^2} \cdots [\ell]_{q^2}}{[\ell]_{q^2} [\ell]_{q^2} \cdots [\ell]_{q^2}} \frac{q^{2\ell}-1}{q^{2\ell}-z}  \frac{q^{2\ell}-1}{q^{2\ell}-zq^2}  \cdots  \frac{q^{2\ell}-1}{q^{2\ell}-zq^{2(k-1)}} \\
&= q^2 q^4 \cdots q^{2(k-1)} \frac{(q^{2\ell}-1)(q^{2\ell-2}-1) \cdots (q^{2(\ell-k+1)}-1)}{(q^{2\ell}-z)\cdots (q^{2\ell}-zq^{2(k-1)})},
\end{align*}
which equals $a_0^{(k,\ell)}(z)$.

For $0<p<k$, plugging \eqref{M1} and \eqref{M2} into \eqref{A3}, we get
\begin{align*}
b_{p}^{(k,\ell)}(z) &= b_{p-1}^{(k-1,\ell)}(zq^2)\left( \frac{1-z}{q^{2\ell}-z}+ \frac{[k-p]_{q^2}}{[\ell]_{q^2}}\frac{q^{2\ell}-1}{q^{2\ell}-z} \right) + b_{p}^{(k-1,\ell)}(zq^2) \cdot q^{2(k-p-1)}\frac{[\ell+p-k+1]_{q^2}}{[\ell]_{q^2}} \frac{q^{2\ell}-1}{q^{2\ell}-z}\\
&= b_{p-1}^{(k-1,\ell)}(zq^2)\left( \frac{1-z}{q^{2\ell}-z}+  \frac{q^{2(k-p)}-1}{q^{2\ell}-z} \right) + b_{p}^{(k-1,\ell)}(zq^2) \cdot  \frac{q^{2\ell}-q^{2(k-p-1)}}{q^{2\ell}-z} \\
&= b_{p-1}^{(k-1,\ell)}(zq^2)\left(  \frac{q^{2(k-p)}-z}{q^{2\ell}-z} \right) + b_{p}^{(k-1,\ell)}(zq^2) \cdot  \frac{q^{2\ell}-q^{2(k-p-1)}}{q^{2\ell}-z}
\end{align*}
If we assume the result for $k-1$, then the case for $k$ amounts to showing that
$$
\binom{k-1}{p-1}_{q^2} ({ q^{2(k-p)}-z}) + \binom{k-1}{p}_{q^2}(1-zq^{2p}) = \binom{k}{p}_{q^2}(1-z).
$$
This follows from the identities \eqref{bin}. Thus $b_p^{(k,\ell)}(z) = a_p^{(k,\ell)}(z)$ for $0<p<k$.

Having show that $b_p^{(k,\ell)}(z) = a_p^{(k,\ell)}(z)$ for all values of $p,k,\ell$, the proof is now complete.
\end{proof}

We return to the proof of the theorem. The calculation is essentially the same as in \cite{crampebax}. For completeness the argument is repeated here.

Proceed by induction on the values of $k$ and $l$. For $k=l=1$ the element reduces to \eqref{eq:Shecke}.

Now suppose that the formula holds for $k=1$ and some $l-1$. By definition \eqref{eq:defkls} becomes
$$
\accentset{\circ}{R}^{(1,\ell)}(u) = \accentset{\circ}{S}_{[2,\ell+1]} \accentset{\circ}{R}_1(u) \cdots \accentset{\circ}{R}_\ell({ u q^{2(\ell-1)}}) \accentset{\circ}{S}_{[1,\ell]}.
$$
We use $\accentset{\circ}{S}_{[2,\ell+1]} = \accentset{\circ}{S}_{[2,\ell+1]} \accentset{\circ}{S}_{[2,\ell]} $ and \eqref{proj} to obtain
$$
\accentset{\circ}{R}^{(1,\ell)}(u) = \accentset{\circ}{S}_{[2,\ell+1]} \accentset{\circ}{R}_{[1,\ell]}^{(1,\ell-1)}(u) \accentset{\circ}{R}_{\ell}(u q^{2(\ell-1)}) \accentset{\circ}{S}_{[1,\ell]}.
$$
By the inductive hypothesis, 
$$
\accentset{\circ}{R}^{(1,\ell)}(u) = \accentset{\circ}{S}_{[2,\ell+1]} \left( a_1^{(1,\ell-1)}(u) \accentset{\circ}{\Sigma}^{(1,\ell-1;1)} + a_0^{(1,\ell-1)}(u) \accentset{\circ}{\Sigma}^{(1,\ell-1;0)} \right) \left(  \frac{1-u q^{2(\ell-1)} }{q^2-u q^{2(\ell-1)}}\accentset{\circ}{\sigma}_{\ell}+\frac{q^2-1}{q^2-u q^{2(\ell-1)}}  \right)\accentset{\circ}{S}_{[1,\ell]}.
$$
From the definition of $\accentset{\circ}{\Sigma}^{(1,l-1;1)}$ we obtain 

\begin{align*}
\accentset{\circ}{R}^{(1,\ell)}(u) &= \accentset{\circ}{S}_{[2,\ell+1]}\Big( a_1^{(1,l-1)}(u) \frac{1-u q^{2(\ell-1)}}{q^2-u q^{2(\ell-1)}} \accentset{\circ}{\sigma}_1 \cdots \accentset{\circ}{\sigma}_{\ell} + a_1^{(1,l-1)}(u) \frac{q^2-1}{q^2-u q^{2(\ell-1)}} \accentset{\circ}{\sigma}_1 \cdots \accentset{\circ}{\sigma}_{\ell-1} \\
& \quad \quad \quad \quad \quad\quad \quad \quad \quad \quad \quad \quad \quad  + a_0^{(1,l-1)}(u) \frac{1-u q^{2(\ell-1)}}{q^2-u q^{2(\ell-1)}}  \accentset{\circ}{\sigma}_{\ell} + a_0^{(1,l-1)}(u) \frac{q^2-1}{q^2-u q^{2(\ell-1)}}  \Big) \accentset{\circ}{S}_{[1,l]}\\
&= a_1^{(1,l-1)}(u) \frac{1-u q^{2(\ell-1)}}{q^2-u q^{2(\ell-1)}} \accentset{\circ}{\Sigma}^{(1,\ell;1)} \\
& \quad \quad \quad +\accentset{\circ}{S}_{[2,\ell+1]} \left( a_1^{(1,l-1)}(u) \frac{q^2-1}{q^2-u q^{2(\ell-1)}} + a_0^{(1,l-1)}(u) \frac{1-u q^{2(\ell-1)}}{q^2-u q^{2(\ell-1)}}   + a_0^{(1,l-1)}(u) \frac{q^2-1}{q^2-u q^{2(\ell-1)}}   \right) \accentset{\circ}{S}_{[1,\ell]}.
\end{align*}
To obtain the previous equality we used $\accentset{\circ}{S}_{[2,\ell+1]} \accentset{\circ}{\sigma}_l = \accentset{\circ}{S}_{[2,\ell+1]} $ and $\accentset{\circ}{\sigma}_j \accentset{\circ}{S}_{[1,l]} = \accentset{\circ}{S}_{[1,l]}$. By \eqref{A1} we obtain 
$$
\accentset{\circ}{R}^{(1,\ell)}(u)  = a_1^{(1,\ell-1)}(u)\accentset{\circ}{\Sigma}^{(1,\ell;1)} +  a_0^{(1,\ell-1)}(u)\accentset{\circ}{\Sigma}^{(1,\ell;0)}.
$$
This therefore proves the lemma for $k=1$ and all $ l\geq 1$. 

Now suppose that $1 < k \leq l$, and that the lemma is true for $k-1$. By definition, one has 
\begin{align*}
\accentset{\circ}{R}^{(k, \ell)}(u)=\accentset{\circ}{S}_{[1, k]} \accentset{\circ}{S}_{[k+1, k+\ell]} &\quad \accentset{\circ}{R}_{k}\left(u q^{2(1-k)}\right) \accentset{\circ}{R}_{k+1}\left(u q^{2(2-k)}\right) \ldots \accentset{\circ}{R}_{k+\ell-1}\left(u q^{2(\ell-k)}\right)\\
&\times \cdots \\
& \times \accentset{\circ}{R}_{2}\left(u q^{-2}\right) \accentset{\circ}{R}_{3}(u) \ldots \tilde{S}_{\ell+1}\left(u q^{2(\ell-2)}\right)\\
& \times \accentset{\circ}{R}_{1}(u) \accentset{\circ}{R}_{2}\left(u q^{2}\right) \ldots \accentset{\circ}{R}_{\ell}\left(u q^{2(\ell-1)}\right) \quad \accentset{\circ}{S}_{[1, \ell]} \accentset{\circ}{S}_{[\ell+1, k+\ell]}
\end{align*}
We use that $\accentset{\circ}{S}_{[1,k]} = \accentset{\circ}{S}_{[1,k]}\accentset{\circ}{S}_{[2,k]}$ and \eqref{proj} to obtain
$$
\accentset{\circ}{R}^{(k, \ell)}(u)=\accentset{\circ}{S}_{[1, k]} \accentset{\circ}{R}_{[2, k+\ell]}^{(k-1, \ell)}\left(u q^{-2}\right) \accentset{\circ}{R}_{[1, \ell+1]}^{(1, \ell)}(u) \accentset{\circ}{S}_{[\ell+1, k+\ell]}
$$

By the inductive hypothesis and the previous result for $k=1$, we get
$$
\accentset{\circ}{R}^{(k, \ell)}(u)=\accentset{\circ}{S}_{[1, k]} \sum_{p=0}^{k-1} a_{p}^{(k-1, \ell)} \left(u q^{-2}\right) \accentset{\circ}{\Sigma}_{[2, k+\ell]}^{(k-1, \ell ; p)} \left(a_1^{(1,l)}(u)  \sigma_{1} \ldots \sigma_{\ell}+a_{0}^{(1, \ell)}(u)\right) \accentset{\circ}{S}_{[1, \ell]} \accentset{\circ}{S}_{[\ell+1, k+\ell]}
$$
Replace $ \accentset{\circ}{\Sigma}_{[2, k+\ell]}^{(k-1, \ell ; p)} $ with its definition. First we have:
$$
\begin{aligned} a_1^{(1,\ell)}(u) & \accentset{\circ}{S}_{[1, k]} \accentset{\circ}{\Sigma}_{[2, k+\ell]}^{(k-1, \ell ; p)} \accentset{\circ}{\sigma}_{1} \ldots \accentset{\circ}{\sigma}_{\ell} \accentset{\circ}{S}_{[1, \ell]} \accentset{\circ}{S}_{[\ell+1, k+\ell]} \\
=a_1^{(1,\ell)}(u) & \accentset{\circ}{S}_{[1, k]} \accentset{\circ}{S}_{[k+1, k+\ell]}\left(\accentset{\circ}{\sigma}_{k} \ldots \accentset{\circ}{\sigma}_{\ell+p}\right) \ldots\left(\accentset{\circ}{\sigma}_{k-p+1} \ldots \accentset{\circ}{\sigma}_{\ell+1}\right) \accentset{\circ}{S}_{[2, \ell+1]} \accentset{\circ}{\sigma}_{1} \ldots \accentset{\circ}{\sigma}_{\ell} \accentset{\circ}{S}_{[1, \ell]} \accentset{\circ}{S}_{[\ell+1, k+\ell]} \\
=a_1^{(1,\ell)}(u) & \accentset{\circ}{S}_{[1, k]} \accentset{\circ}{S}_{[k+1, k+\ell]}\left(\accentset{\circ}{\sigma}_{k} \ldots \accentset{\circ}{\sigma}_{\ell+p}\right) \ldots\left(\accentset{\circ}{\sigma}_{k-p+1} \ldots \accentset{\circ}{\sigma}_{\ell+1}\right)\left(\accentset{\circ}{\sigma}_{1} \ldots \accentset{\circ}{\sigma}_{\ell}\right) \accentset{\circ}{S}_{[1, \ell]} \accentset{\circ}{S}_{[\ell+1, k+\ell]} \\
=a_1^{(1,\ell)}(u) & \accentset{\circ}{\Sigma}^{(k, \ell ; p+1)} \end{aligned}
$$
In the second equality we use $\accentset{\circ}{S}_{[2,\ell+1]}\accentset{\circ}{\sigma}_1 \cdots \accentset{\circ}{\sigma}_l =\accentset{\circ}{\sigma}_1 \cdots \accentset{\circ}{\sigma}_l \accentset{\circ}{S}_{[1,\ell]}$ (this is \eqref{proj} when $u\rightarrow \infty$); in the last, we notice that $\accentset{\circ}{\sigma}_1,\ldots,\accentset{\circ}{\sigma}_{k-p-1}$ in the last paranthesis commute with all elements on their left and are absorbed into $\accentset{\circ}{S}_{[1,k]}$. 

Second, we have, using \eqref{recur} for $\accentset{\circ}{S}_{[2,\ell+1]}$ and $\accentset{\circ}{S}_{[2,\ell]}\accentset{\circ}{S}_{[1,\ell]} = \accentset{\circ}{S}_{[1,\ell]}$ ,
$$
\begin{aligned} & \accentset{\circ}{S}_{[1, k]} \accentset{\circ}{\Sigma}_{[2, k+\ell]}^{(k-1, \ell ; p)} \accentset{\circ}{S}_{[1, \ell]} \accentset{\circ}{S}_{[\ell+1, k+\ell]} \\=& \accentset{\circ}{S}_{[1, k]} \accentset{\circ}{S}_{[k+1, k+\ell]}\left(\accentset{\circ}{\sigma}_{k} \ldots \accentset{\circ}{\sigma}_{\ell+p}\right) \ldots\left(\accentset{\circ}{\sigma}_{k-p+1} \ldots \accentset{\circ}{\sigma}_{\ell+1}\right) \accentset{\circ}{S}_{[2, \ell+1]} \accentset{\circ}{S}_{[1, \ell]} \accentset{\circ}{S}_{[\ell+1, k+\ell]} \\=& \frac{1}{[\ell]_{q^2}} \sum_{a=2}^{\ell+1} q^{2(a-2)} \accentset{\circ}{S}_{[1, k]} \accentset{\circ}{S}_{[k+1, k+\ell]} \underbrace{\left(\accentset{\circ}{\sigma}_{k} \ldots \accentset{\circ}{\sigma}_{\ell+p}\right) \ldots\left(\accentset{\circ}{\sigma}_{k-p+1} \ldots \accentset{\circ}{\sigma}_{\ell+1}\right)}_{=: x}\left(\accentset{\circ}{\sigma}_{a} \accentset{\circ}{\sigma}_{a+1} \ldots \accentset{\circ}{\sigma}_{\ell}\right) \accentset{\circ}{S}_{[1, \ell]} \accentset{\circ}{S}_{[\ell+1, k+\ell]} \end{aligned}
$$
One checks that $x$ commutes with $\accentset{\circ}{\sigma}_2,\ldots,\accentset{\circ}{\sigma}_{k-p-1}$, while $x \sigma_b = \sigma_{b+p}x$ if $b>x-p$. Thus we have
$$
\accentset{\circ}{S}_{[1,k]}\accentset{\circ}{S}_{[k+1,\ell+k]} x \accentset{\circ}{\sigma}_b = \accentset{\circ}{S}_{[1,k]}\accentset{\circ}{S}_{[k+1,\ell+k]} x  \text{ for any } b \neq k-p.
$$
Moreover, moving the rightmost element in each parenthesised factor of $x$ and multiplying it to $\accentset{\circ}{S}_{[\ell+1,l+\ell]}$, one finds that
$$
x\accentset{\circ}{S}_{[\ell+1,l+\ell]} = (\accentset{\circ}{\sigma}_k \cdots \accentset{\circ}{\sigma}_{\ell+p-1}) \cdots (\accentset{\circ}{\sigma}_{k-p+1} \cdots \accentset{\circ}{\sigma}_{\ell}) \accentset{\circ}{S}_{[\ell+1,l+\ell]}.
$$
Therefore
\begin{align*}
&\accentset{\circ}{S}_{[1, k]} \accentset{\circ}{\Sigma}_{[2, k+\ell]}^{(k-1, \ell ; p)} \accentset{\circ}{S}_{[1, \ell]} \accentset{\circ}{S}_{[\ell+1, k+\ell]} \\
&=\frac{1}{[\ell]_{q^2}} \sum_{a=2}^{k-p} q^{2(a-2)}  \accentset{\circ}{S}_{[1, k]} \accentset{\circ}{S}_{[k+1, k+\ell]} {\left(\accentset{\circ}{\sigma}_{k} \ldots \accentset{\circ}{\sigma}_{\ell+p}\right) \ldots\left(\accentset{\circ}{\sigma}_{k-p+1} \ldots \accentset{\circ}{\sigma}_{\ell+1}\right)}\left(\accentset{\circ}{\sigma}_{k-p} \accentset{\circ}{\sigma}_{k-p+1} \ldots \accentset{\circ}{\sigma}_{\ell}\right) \accentset{\circ}{S}_{[1, \ell]} \accentset{\circ}{S}_{[\ell+1, k+\ell]}\\
& \quad \quad + \frac{1}{[\ell]_{q^2}} \sum_{a=k-p+1}^{\ell+1} q^{2(a-2)} \accentset{\circ}{S}_{[1,k]}\accentset{\circ}{S}_{[k+1,k+\ell]} (\accentset{\circ}{\sigma}_k \cdots \accentset{\circ}{\sigma}_{\ell+p-1}) \cdots ( \accentset{\circ}{\sigma}_{k-p+1} \cdots \accentset{\circ}{\sigma}_{\ell}) \accentset{\circ}{S}_{[1,\ell]}\accentset{\circ}{S}_{[\ell+1,k+\ell]}\\
&=\frac{[k-p-1]_{q^2}}{[\ell]_{q^2}} \accentset{\circ}{\Sigma}^{(k,\ell;p+1)} + q^{2(k-p-1)} \frac{[\ell-k+p+1]_{q^2}}{[\ell]_{q^2}}\accentset{\circ}{\Sigma}^{(k,\ell;p)}.
\end{align*}
Therefore
\begin{align*}
&\accentset{\circ}{R}^{(k, \ell)}(u) \\
&=  \sum_{p=0}^{k-1} a_{p}^{(k-1, \ell)}(uq^{-2}) \left(a_1^{(1,\ell)}(u)  \accentset{\circ}{\Sigma}^{(k, \ell ; p+1)}  + a_0^{(1,\ell)}(u) \left( \frac{[k-p-1]_{q^2}}{[\ell]_{q^2}} \accentset{\circ}{\Sigma}^{(k,\ell;p+1)} + q^{2(k-p-1)} \frac{[\ell-k+p+1]_{q^2}}{[\ell]_{q^2}}   \accentset{\circ}{\Sigma}^{(k,\ell;p)}\right)\right)\\
&=a_0^{(k-1,\ell)}(uq^{-2})a_0^{(1,\ell)}(u) q^{2(k-1)}\frac{ [\ell-k+1]_{q^2}}{[\ell]_{q^2}} \accentset{\circ}{\Sigma}^{(k,\ell;0)}\\
&\quad + \sum_{p=1}^{k-1} \left[a_{p-1}^{(k-1,\ell)}(uq^{-2}) \left(a_1^{(1,\ell)}(u) + a_0^{(1,\ell)}(u) \frac{[k-p]_{q^2}}{[\ell]_{q^2}} \right) + a_p^{(k-1,\ell})(uq^{-2}) a_0^{(1,\ell)}(u) q^{2(k-p-1)} \frac{[\ell-k+p+1]_{q^2}}{[\ell]_{q^2}}   \right]\accentset{\circ}{\Sigma}^{(k,\ell;p)}\\
& \quad + a_{k-1}^{(k-1,\ell)}(uq^{-2}) a_1^{(1,\ell)}(u) \accentset{\circ}{\Sigma}^{(k,\ell;k)}.
\end{align*}
By respectively applying \eqref{A2}, \eqref{A3}, \eqref{A4} to each of the last three lines we conclude
$$
\accentset{\circ}{R}^{(k,\ell)}(u) = \sum_{p=0}^k a_p^{(k,\ell)}(u) \accentset{\circ}{\Sigma}^{(k,l;p)},
$$
which completes the proof.

\subsection{A matrix--valued stochastic solution to YBE}
By taking a representation of the fused Hecke algebra, the action of $\accentset{\circ}{R}^{(k,\ell)}(u)$ produces a matrix--valued solution the Yang--Baxter equation. Define the stochastic matrix $\accentset{\circ}{R}$ by 
$$
\accentset{\circ}{R}\left(e_{i} \otimes e_{j}\right):=\left\{\begin{array}{ll} e_{i} \otimes e_{j} & \text { if } i=j \\ e_{j} \otimes e_{i}+\left(1-q^2\right) e_{i} \otimes e_{j} & \text { if } i<j, \quad \text { where } i, j=1, \ldots, N \\ q^2e_{j} \otimes e_{i} & \text { if } i>j\end{array}\right.
$$
We write this as
$$
\accentset{\circ}{R} = q^2\sum_{i>j} E_{j,i} \otimes E_{i,j} + \sum_{i<j} E_{j,i} \otimes E_{i,j}+(1-q^2)\sum_{i<j} E_{i,i} \otimes E_{j,j}+ \sum_i  E_{i,i} \otimes E_{i,i}
$$
As with the non--stochastic case, these matrices define a representation of the fused Hecke algebra. 


For $N=2$, the matrix entries can be calculated explicitly using a result from \cite{KuanSW}. A general $N$ formula had been previously found in \cite{BoMa16}, but has a different form than the one presented here. More specifically, the spectral parameter $z$ only occurs through the coefficients $a_p^{(k,\ell)}(z)$. When $N=2$, the representation $L_{(k)}^2$ is $(k+1)$--dimensional. It can be identified with  $\accentset{\circ}{S}_{[1,k]}((\mathbb{C}^2)^{\otimes k} )\subseteq (\mathbb{C}^2)^{\otimes k}$. A basis of $L_{(k)}^2$ is then given by the images of $e_1^{\otimes j} \otimes e_2^{k-j}$ under $\accentset{\circ}{S}_{[1,k]}$, for $0 \leq j \leq k$. The matrix $\accentset{\circ}{R}^{(k,\ell)}(u)$ thus has rows and columns indexed by pairs $(k',\ell')$ where $0 \leq k' \leq k$ and $0 \leq \ell' \leq \ell$.
\begin{prop}\label{formula}
The  $(k',\ell'; \tilde{\ell},\tilde{k})$ entry of $\accentset{\circ}{R}^{(k,l)}(z)$ is given by 
\begin{multline*}
1_{\{k'+\ell' = \tilde{\ell} + \tilde{k}\}} \sum_{k'',\ell'',p} a_p^{(k,\ell)}(z)\frac{\binom{k-p}{k'-k''}_{q^2} \binom{p}{k''} q^{2k''(k-p-k'+k'')}}{\binom{k}{k'}_{q^2}} \cdot \frac{\binom{k-p}{\ell'-\ell''}_{q^2} \binom{\ell-k+p}{\ell''}_{q^2} q^{2\ell''(k-p-\ell'+\ell'')}}{ \binom{\ell}{\ell'}_{q^2}} \\
\times  J_{\ell-k+p}(k'',\ell'',\tilde{k}-k'+k'',\tilde{\ell}-\ell'+\ell'').
\end{multline*}
where
$$
J_m(k'',\ell'';\ell''+\delta,k''-\delta)=\binom{k''}{ k''-\delta}_{q^2}\left(q^{2\left(m-\ell''\right)} ; q^{-2}\right)_{\delta} q^{2\left(m-\ell''-\delta\right) (k''-\delta)}.
$$
\end{prop}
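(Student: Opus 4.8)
The plan is to start from the decomposition furnished by the Theorem, $\accentset{\circ}{R}^{(k,\ell)}(z)=\sum_{p=0}^{k}a_p^{(k,\ell)}(z)\,\accentset{\circ}{\Sigma}^{(k,\ell;p)}$, so that the sought entry is the $a_p^{(k,\ell)}(z)$--weighted sum of the matrix entries of the partial braidings $\accentset{\circ}{\Sigma}^{(k,\ell;p)}$ on the symmetric basis. Each generator $\accentset{\circ}{\sigma}_i$ preserves the number of $e_1$'s in a standard tensor, and the symmetrizers $\accentset{\circ}{S}$ are built from the $\accentset{\circ}{\sigma}_i$; hence every $\accentset{\circ}{\Sigma}^{(k,\ell;p)}$ conserves particle number, which accounts for the indicator $1_{\{k'+\ell'=\tilde\ell+\tilde k\}}$ and reduces the problem to computing, for each fixed $p$, the entry of $\accentset{\circ}{\Sigma}^{(k,\ell;p)}$ between two fixed symmetric states.

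The core of the argument is to evaluate these entries by factoring $\accentset{\circ}{\Sigma}^{(k,\ell;p)}=\accentset{\circ}{P}^{(k,\ell)}\,(\text{crossing braid})\,\accentset{\circ}{P}^{(\ell,k)}$ through the representation on $(\mathbb{C}^2)^{\otimes(k+\ell)}$. The key observation is that the crossing braid moves only the $p$ distinguished strands of one group across the block of the other, so that, because the symmetrizers make the strands within each group exchangeable, the incoming symmetric vectors may be expanded over how their particles are distributed between the strands that cross and those that stay put. I would carry this out using the sum formula $\accentset{\circ}{S}_{[i,j]}=\tfrac{q^{(j-i+1)(j-i)}}{[j-i+1]_{q^2}^!}\sum_w q^{-2l(w)}\accentset{\circ}{\sigma}_w$ together with the quantum Vandermonde identity $\binom{k}{k'}_{q^2}=\sum_{k''}\binom{p}{k''}_{q^2}\binom{k-p}{k'-k''}_{q^2}q^{2k''(k-p-k'+k'')}$. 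This produces exactly the two ratios of $q$--binomials in the statement, namely the conditional $q$--hypergeometric weight for $k''$ of the $k'$ particles (resp. $\ell''$ of the $\ell'$ particles) to occupy the crossing strands, with the denominators $\binom{k}{k'}_{q^2}$ and $\binom{\ell}{\ell'}_{q^2}$ normalizing the symmetric input vectors; the particular block sizes $p,\,k-p$ and $k-p,\,\ell-k+p$ are dictated by the strand geometry of the diagram defining $\accentset{\circ}{\Sigma}^{(k,\ell;p)}$.

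Once the occupations $k''$ and $\ell''$ of the participating strands are fixed, what remains is the matrix entry of a single fused crossing of the two distinguished bundles, which is precisely the quantity computed in \cite{KuanSW}; identifying the relevant capacity as $m=\ell-k+p$ and the number of hopping particles as $\delta$, this entry is the function $J_{\ell-k+p}(k'',\ell'';\ell''+\delta,k''-\delta)$, with output occupations $\ell''+\delta=\tilde k-k'+k''$ and $k''-\delta=\tilde\ell-\ell'+\ell''$ (whose compatibility is exactly the relation $\tilde k+\tilde\ell=k'+\ell'$). Summing over the internal distributions $k'',\ell''$ and over the crossing number $p$ then assembles the stated formula. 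The output symmetrizer $\accentset{\circ}{P}^{(k,\ell)}$ contributes no additional factor, since once the input expansion and the crossing have been carried out it is absorbed via the idempotency of the symmetrizers and the relations $\accentset{\circ}{\sigma}_a\accentset{\circ}{S}_{[i,j]}=\accentset{\circ}{S}_{[i,j]}$ already used in the proof of the Theorem.

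I expect the principal obstacle to be the $q$--power bookkeeping in the middle step: one must check that the framing factors $q^{2k''(k-p-k'+k'')}$ and $q^{2\ell''(k-p-\ell'+\ell'')}$ coming from the two quantum Vandermonde expansions combine with the internal $q$--powers of the \cite{KuanSW} crossing element $J_m$ to reproduce the product in the statement, with no stray powers introduced by re--symmetrizing the output. A secondary, purely bookkeeping, difficulty is fixing the braided--versus--ordinary convention so that the input is read as the pair $(k',\ell')$ and the output as $(\tilde\ell,\tilde k)$, and verifying that the ranges of the summation indices $k'',\ell'',p$ coincide with those on which all the $q$--binomials are nonzero; both are routine once the geometry of $\accentset{\circ}{\Sigma}^{(k,\ell;p)}$ has been pinned down.
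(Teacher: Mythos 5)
Your proposal follows essentially the same route as the paper's proof: decompose via the Theorem, use particle-number conservation for the indicator, expand the input symmetrizers with the $q$--Chu--Vandermonde identity to produce the two ratios of $q$--binomials, and invoke \cite{KuanSW} for the crossing factor $J_{\ell-k+p}$. The only cosmetic difference is that in the paper the entries of $\accentset{\circ}{P}^{(\ell,k)}$ at the output end are folded into the sum that \cite{KuanSW} evaluates as $J$, rather than being ``absorbed'' separately as you describe, but this does not change the argument.
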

\begin{proof}
The fact that the only nonzero entries occur when $ k'+\ell' = \tilde{\ell} + \tilde{k}$ is due to the well--known weight preserving property of $R$--matrices. 

By the theorem, it suffices to show that each $\accentset{\circ}{\Sigma}^{(k,\ell;p)}$ has entries
\begin{multline*}
\sum_{k'',\ell''} \frac{\binom{k-p}{k'-k''}_{q^2} \binom{p}{k''} q^{2k''(k-p-k'+k'')}}{\binom{k}{k'}_{q^2}} \cdot \frac{\binom{k-p}{\ell'-\ell''}_{q^2} \binom{\ell-k+p}{\ell''}_{q^2} q^{2\ell''(k-p-\ell'+\ell'')}}{ \binom{\ell}{\ell'}_{q^2}}\\
\times  J_{\ell-k+p}(k'',\ell'',\tilde{k}-k'+k'',\tilde{\ell}-\ell'+\ell'')
\end{multline*}
By definition \eqref{partial},
$$
 \accentset{\circ}{\Sigma}^{(k,\ell;p)}= \accentset{\circ}{P}^{(k,\ell)}  (\accentset{\circ}{\sigma}_k \accentset{\circ}{\sigma}_{k+1}\dots \accentset{\circ}{\sigma}_{\ell+p-1}) (\accentset{\circ}{\sigma}_{k-1} \accentset{\circ}{\sigma}_{k}\dots \accentset{\circ}{\sigma}_{\ell+p-2})  \dots  (\accentset{\circ}{\sigma}_{k-p+1} \accentset{\circ}{\sigma}_{k-p+2}\dots \accentset{\circ}{\sigma}_{\ell})  \accentset{\circ}{P}^{(\ell,k)}.
$$
Let $A$ denote the product of the $\accentset{\circ}{\sigma}$ elements in the middle of the expression. We give a probabilistic interpretation of the stochastic matrices $\accentset{\circ}{P}^{(k,\ell)},A, \accentset{\circ}{P}^{(\ell,k)}$.  First, consider the map $\accentset{\circ}{P}^{(k,\ell)} = \accentset{\circ}{S}_{[1,k]} \accentset{\circ}{S}_{[k+1,k+\ell]}$. The symmetrizer $\accentset{\circ}{S}_{[1,k]} $ takes $k'$ particles and randomly places them so that $k'-k''$ particles are in the left $k-p$ positions and $k''$ are in the right $p$ positions. Similarly, the symmetrizer $\accentset{\circ}{S}_{[k+1,k+\ell]} $ takes $\ell'$ particles and randomly places them so that $\ell''$ particles are in the left $\ell-k+p$ positions and $\ell''$ are in the right $k-p$ positions. See the figure below:
\begin{center}
 \begin{tikzpicture}[scale=0.23]

\node at (7,4) {$\dots$};
\node at (6.5,0) {$\underbrace{\phantom{blank text here must  } }_{k-p}$};
\node at (16.3,0) {$\underbrace{\phantom{blank wha } }_{p}$};
\node at (22.1,0) {$\underbrace{\phantom{blackk}}_{\ell-k+p}$};
\node at (30.5,0) {$\underbrace{\phantom{blank text here must  } }_{k-p}$};
\node at (10,4)[scale=0.8] {$k'-k''$};
\fill (12,4) circle (0.2cm);
\node at (12.5,4.5) {$\vdots$};
\node at (15,4)[scale=0.8] {$k''$};
\fill (17,4) circle (0.2cm);

\node at (22,4)[scale=0.8] {$\ell''$};
\fill (21,4) circle (0.2cm);
\fill (23,4) circle (0.2cm);
\node at (24,4.5) {$\vdots$};
\node at (26,4) {$\dots$};
\fill (29,4) circle (0.2cm);
\node at (33,4)[scale=0.8] {$\ell-\ell''$};

\fill[fill=gray,opacity=0.3] (9.8,4) ellipse (10cm and 0.6cm);
\fill[fill=gray,opacity=0.3] (28.4,4) ellipse (8.3cm and 0.6cm);

\end{tikzpicture}
\end{center}
The probabilities of how the particles are placed are given by products of the $q$--binomials. The matrix $A$ then randomly rearranges the particles in between the dotted lines. It is shown in \cite{KuanSW} that the ending locations of the particles only depend on $k''$ and $\ell''$, and not on the initial locations of the particles:
\begin{center}
 \begin{tikzpicture}[scale=0.23]

\node at (7,4) {$\dots$};
\node at (6.5,0) {$\underbrace{\phantom{blank text here must  } }_{k-p}$};
\node at (15.1,0) {$\underbrace{\phantom{blackk}}_{\ell-k+p}$};
\node at (20.3,0) {$\underbrace{\phantom{blank wha } }_{p}$};
\node at (30.5,0) {$\underbrace{\phantom{blank text here must  } }_{k-p}$};
\node at (10,4)[scale=0.8] {$k'-k''$};
\fill (12,4) circle (0.2cm);
\node at (12.5,4.5) {$\vdots$};
\node at (15,4)[scale=0.8] {$\ell''+\delta$};
\fill (17,4) circle (0.2cm);

\node at (20,4)[scale=0.8] {$k''-\delta$};
\fill (23,4) circle (0.2cm);
\node at (24,4.5) {$\vdots$};
\node at (26,4) {$\dots$};
\fill (29,4) circle (0.2cm);
\node at (33,4)[scale=0.8] {$\ell-\ell''$};

\fill[fill=gray,opacity=0.3] (8.4,4) ellipse (8.3cm and 0.6cm);
\fill[fill=gray,opacity=0.3] (27,4) ellipse (10cm and 0.6cm);

\end{tikzpicture}
\end{center} 
In order for the $\accentset{\circ}{P}^{(\ell,k)}$ term to be nonzero, we only consider ending configurations such that $k'-k'' + \ell''+\delta = \tilde{\ell} $ and $k''-\delta + \ell-\ell'' = \tilde{k}$. The sum over all such configurations is given by the $J$ term, and was calculated in \cite{KuanSW}.

We now formalize the argument above. Each matrix $\accentset{\circ}{P}^{(k,\ell)},A, \accentset{\circ}{P}^{(\ell,k)}$ is has $2^{k+\ell}$ rows and $2^{k+\ell}$ columns. The rows and columns are indexed by pairs $(\mathcal{S}_k, \mathcal{S}_\ell)$ where $\mathcal{S}_k \subseteq \{1,\ldots,k\}$ and $\mathcal{S}_\ell \subseteq \{1,\ldots ,\ell\}$, where each basis vector $e_{i_1} \otimes \cdots e_{i_k}$ corresponds to the subset of indices $j$ such that $i_j=1$. For example, the basis vector $e_1 \otimes e_2 \otimes e_2 \otimes e_1$ corresponds to the subset $\{1,4\} \subset \{1,2,3,4\}$. Therefore the $(k',\ell'; \tilde{\ell} ,  \tilde{k})$ entry of $ \accentset{\circ}{\Sigma}^{(k,\ell;p)}$ equals
$$
\sum_{\mathcal{S}_k, \tilde{\mathcal{S}}_k, \mathcal{S}_{\ell}, \tilde{\mathcal{S}}_{\ell}} \accentset{\circ}{P}^{(k,\ell)}(k',\ell'; \mathcal{S}_k, \mathcal{S}_\ell) A( \mathcal{S}_k, \mathcal{S}_\ell;  \tilde{\mathcal{S}}_\ell, \tilde{\mathcal{S}}_k) \accentset{\circ}{P}^{(\ell,k)}(   \tilde{\mathcal{S}}_\ell, \tilde{\mathcal{S}}_k, \tilde{\ell}, \tilde{k})
$$

As explained in \cite{KuanSW}, the entries of $A$ only depend on the magnitudes of $\mathcal{S}_k\cap \{1,\ldots,k-p\}, \mathcal{S}_l \cap \{1,\ldots,\ell-k+p\}, \tilde{\mathcal{S}}_k \cap \{1,\ldots,p\}, \tilde{\mathcal{S}}_{\ell} \cap \{\ell-k+p+1,\ldots,\ell\}$. So thus the entry is given by
$$
\sum_{k'',\ell'',\delta}  \sum_{\vert \mathcal{S}_k \vert= k''} \sum_{ \vert \mathcal{S}_{\ell} \vert = \ell''}  \sum_{ \vert \tilde{\mathcal{S}}_{\ell} \vert = \ell''+\delta}   \sum_{ \vert \tilde{\mathcal{S}}_{k} \vert = k''-\delta}  \accentset{\circ}{P}^{(k,\ell)}(k',\ell'; \mathcal{S}_k, \mathcal{S}_\ell) A( \mathcal{S}_k, \mathcal{S}_\ell;  \tilde{\mathcal{S}}_\ell, \tilde{\mathcal{S}}_k) \accentset{\circ}{P}^{(\ell,k)}(   \tilde{\mathcal{S}}_\ell, \tilde{\mathcal{S}}_k, \tilde{\ell}, \tilde{k}).
$$
The sum over $\mathcal{S}_k$ and $\mathcal{S}_{\ell}$ can be evaluated using the $q$--binomial theorem and  the $q$--Chu--Vandermonde identity:
$$
\binom{m+n}{k}_{q^2} = \sum_j \binom{n}{j}_{q^2} \binom{m}{k-j}_{q^2} q^{2j(m-k+j)}.
$$ 
We obtain 
\begin{multline*}
 \sum_{\vert \mathcal{S}_k \vert= k''} \sum_{ \vert \mathcal{S}_{\ell} \vert = \ell''}  \accentset{\circ}{P}^{(k,\ell)}(k',\ell'; \mathcal{S}_k, \mathcal{S}_\ell)A( \mathcal{S}_k, \mathcal{S}_\ell;  \tilde{\mathcal{S}}_\ell, \tilde{\mathcal{S}}_k)  \\
 = A( k'',\ell'';  \tilde{\mathcal{S}}_\ell, \tilde{\mathcal{S}}_k) \frac{\binom{k-p}{k'-k''}_{q^2} \binom{p}{k''} q^{2k''(k-p-k'+k'')}}{\binom{k}{k'}_{q^2}} \cdot \frac{\binom{k-p}{\ell'-\ell''}_{q^2} \binom{\ell-k+p}{\ell''}_{q^2} q^{2\ell''(k-p-\ell'+\ell'')}}{ \binom{\ell}{\ell'}_{q^2}},
\end{multline*}
where we used the fact that $A( \mathcal{S}_k, \mathcal{S}_\ell;  \tilde{\mathcal{S}}_\ell, \tilde{\mathcal{S}}_k) $ only depends on $\mathcal{S}_k, \mathcal{S}_{\ell}$ through the variables $k'',\ell''$. Thus, it remains to show that
$$
\sum_{\delta}   \sum_{ \vert \tilde{\mathcal{S}}_{\ell} \vert = \ell''+\delta}   \sum_{ \vert \tilde{\mathcal{S}}_{k} \vert = k''-\delta} A(k'',\ell'';  \tilde{\mathcal{S}}_\ell, \tilde{\mathcal{S}}_k) \accentset{\circ}{P}^{(\ell,k)}(   \tilde{\mathcal{S}}_\ell, \tilde{\mathcal{S}}_k, \tilde{\ell}, \tilde{k})= J_{\ell-k+p}(k'',\ell'',\tilde{k}-k'+k'',\tilde{\ell}-\ell'+\ell'').
$$
In order for the $\accentset{\circ}{P}^{(\ell,k)}$ term to be nonzero, we require  $\vert \tilde{S}_{\ell}\vert + k'-k'' = \ell''+\delta + k'-k''= \tilde{\ell}$ and $\vert \tilde{S}_k \vert + \ell'-\ell'' = k''-\delta + \ell'-\ell'' = \tilde{k}$, so that  $\delta = \tilde{k} - k'+k'' - \ell'' = \ell'-\ell''+k''- \tilde{\ell}$. This sum on the left--hand--side was evaluated in \cite{KuanSW}, and equals
$$
J_{\ell-k+p}( k'', \ell'', {\ell}''+\delta, {k}''-\delta).
$$
Plugging in the value of $\delta$ completes the proof.

\end{proof}

\begin{example}
Set $k=\ell=2$ and $N=2$. Then
$$
\accentset{\circ}{R}^{(2,2)}(z) =  \frac{(q^4-q^2)(q^4-1)}{(q^4-z)(q^4-zq^2)} \accentset{\circ}{\Sigma}^{(2,2;0)} + \frac{(1+q^2)(1-q^4)(z-1)}{  (q^4-z)(q^4-zq^2)} \accentset{\circ}{\Sigma}^{(2,2;1)} + \frac{(1-z)(1-zq^2)}{(q^4-z)(q^4-zq^2)} \accentset{\circ}{\Sigma}^{(2,2;2)},
$$
The representations are that $\accentset{\circ}{\Sigma}^{(2,2;0)}$ is the $9\times 9$ identity matrix, 
$$
\accentset{\circ}{\Sigma}^{(2,2;1)} = 
\left(\begin{array}{ccccccccc}1&0&0&0&0&0&0&0&0\\0&\frac{-q^4+q^2+1}{q^2+1}&0&\frac{q^4}{q^2+1}&0&0&0&0&0\\0&0&1-q^2&0&q^2&0&0&0&0\\0&\frac{1}{q^2+1}&0&\frac{q^2}{q^2+1}&0&0&0&0&0\\0&0&\frac{1}{\left(q^2+1\right)^2}&0&1-\frac{q^6+1}{\left(q^2+1\right)^2}&0&\frac{q^6}{\left(q^2+1\right)^2}&0&0\\0&0&0&0&0&\frac{-q^4+q^2+1}{q^2+1}&0&\frac{q^4}{q^2+1}&0\\0&0&0&0&1&0&0&0&0\\0&0&0&0&0&\frac{1}{q^2+1}&0&\frac{q^2}{q^2+1}&0\\0&0&0&0&0&0&0&0&1\\\end{array}\right)
$$
and
$$
 \accentset{\circ}{\Sigma}^{(2,2;2)}= 
 \left(\begin{array}{ccccccccc}1&0&0&0&0&0&0&0&0\\0&1-q^4&0&q^4&0&0&0&0&0\\0&0&\left(q^4-q^2-1\right)q^2+1&0&\left(1-q^2\right)\left(q^3+q\right)^2&0&q^8&0&0\\0&1&0&0&0&0&0&0&0\\0&0&1-q^2&0&q^2&0&0&0&0\\0&0&0&0&0&1-q^4&0&q^4&0\\0&0&1&0&0&0&0&0&0\\0&0&0&0&0&1&0&0&0\\0&0&0&0&0&0&0&0&1\\\end{array}\right)
$$
so thus $\accentset{\circ}{R}^{(2,2)}(z)  $ equals
$$
\left(
\begin{array}{ccccccccc}
 1 & 0 & 0 & 0 & 0 & 0 & 0 & 0 & 0 \\
 0 & \frac{\left(q^4-1\right) z}{q^4-z} & 0 & -\frac{q^4 (z-1)}{q^4-z} & 0 & 0 & 0 &
   0 & 0 \\
 0 & 0 & \frac{\left(q^2-1\right)^2 \left(q^2+1\right) z^2}{\left(q^2-z\right)
   \left(q^4-z\right)} & 0 & -\frac{\left(q^2-1\right) \left(q^3+q\right)^2 (z-1)
   z}{\left(q^2-z\right) \left(q^4-z\right)} & 0 & \frac{q^6 (z-1) \left(q^2
   z-1\right)}{\left(q^2-z\right) \left(q^4-z\right)} & 0 & 0 \\
 0 & \frac{z-1}{z-q^4} & 0 & \frac{q^4-1}{q^4-z} & 0 & 0 & 0 & 0 & 0 \\
 0 & 0 & -\frac{\left(q^2-1\right) (z-1) z}{\left(q^2-z\right) \left(q^4-z\right)} &
   0 & \frac{q^6 z+q^4 (1-2 z)+q^2 (z-2) z+z}{\left(q^2-z\right) \left(q^4-z\right)}
   & 0 & -\frac{q^4 \left(q^2-1\right) (z-1)}{\left(q^2-z\right) \left(q^4-z\right)}
   & 0 & 0 \\
 0 & 0 & 0 & 0 & 0 & \frac{\left(q^4-1\right) z}{q^4-z} & 0 & -\frac{q^4
   (z-1)}{q^4-z} & 0 \\
 0 & 0 & \frac{(z-1) \left(q^2 z-1\right)}{q^2 \left(q^2-z\right) \left(q^4-z\right)}
   & 0 & \frac{\left(q^2+1\right) \left(1-q^4\right) (z-1)}{\left(q^4-z\right)
   \left(q^4-q^2 z\right)} & 0 & \frac{\left(q^2-1\right)^2
   \left(q^2+1\right)}{\left(q^2-z\right) \left(q^4-z\right)} & 0 & 0 \\
 0 & 0 & 0 & 0 & 0 & \frac{z-1}{z-q^4} & 0 & \frac{q^4-1}{q^4-z} & 0 \\
 0 & 0 & 0 & 0 & 0 & 0 & 0 & 0 & 1 \\
\end{array}
\right).
$$
One can check that this is the same matrix obtained in the appendix of \cite{KuanCMP}. The $N=2$ formula goes back to  \cite{Bor16}, which is based on the non--stochastic $R$--matrix of \cite{Man14}; see also \cite{CorPetCMP,BP}. The general $N$ formula was written in \cite{BoMa16}; see also \cite{KMMO}. The formula given in Proposition \ref{formula} will always match the previous formulas when $N=2$, because $\accentset{\circ}{R}^{(k,\ell)}(z)$ satisfies the Yang--Baxter equation, commutes with the quantum group action, and is stochastic. These three formulas define the matrix uniquely.
\end{example}

\bibliographystyle{alpha}
\bibliography{Baxterization}

\end{document}